\documentclass[reqno,11pt]{amsart}
\numberwithin{equation}{section}

\usepackage[backend=bibtex,style=alphabetic,natbib=true ,url=false]{biblatex}
\usepackage[utf8]{inputenc}
\usepackage[italian,english]{babel}
\usepackage{amsmath}
\usepackage{amsfonts}
\usepackage{amssymb,esint}
\usepackage{tikz}
\usepackage{geometry}
\geometry{right=3 cm, bottom=2 cm}

\usepackage{color}
\usepackage{mathrsfs}
\usepackage{mathtools}

\mathtoolsset{showonlyrefs}

\vfuzz2pt
\hfuzz2pt

\usepackage[colorlinks, citecolor=citegreen, linkcolor=refred, urlcolor=blue]{hyperref}
\definecolor{citegreen}{rgb}{0,0.4,0}
\definecolor{refred}{rgb}{0.5,0,0}


{\catcode `\@=11 \global\let\AddToReset=\@addtoreset}
\AddToReset{equation}{section}

\newcounter{mnotecount}[section]

\renewcommand{\themnotecount}{\thesection.\arabic{mnotecount}}

\newcommand{\mnote}[1]
{\protect{\stepcounter{mnotecount}}$^{\mbox{\footnotesize
$
\bullet$\themnotecount}}$ \marginpar{
\raggedright\tiny\em
$\!\!\!\!\!\!\,\bullet$\themnotecount: #1} }

\newcommand{\jj}[1]%
{{\color{red}\mnote{{\color{red}{\bf jj:} #1} }}}

%
%

\theoremstyle{plain}
\newtheorem {theorem}{Theorem}[section]
\newtheorem {lemma}[theorem]{Lemma}
\newtheorem {proposition} [theorem]{Proposition}
\newtheorem {corollary} [theorem]{Corollary}

\theoremstyle{remark}

\DeclarePairedDelimiter\abs{\lvert}{\rvert}

\newcommand{\R}{\mathbb R}
\newcommand{\N}{\mathbb N}

\renewcommand{\theta}{\vartheta}


\newcommand{\barint}
{\rule[.036in]{.12in}{.009in}\kern-.16in \displaystyle\int}


\newcommand{\dive}{{\mathrm{div}}}

\usepackage{color}


\newcommand{\numberset}{\mathbb}
\renewcommand{\N}{\numberset{N}}
\renewcommand{\R}{\numberset{R}}
\newcommand{\Sf}{\numberset{S}}

\newcommand{\ric}{\mathop {\rm Ric}\nolimits}


\newcommand{\dd}{{\,\rm d}}

\renewcommand{\phi}{\varphi}
\renewcommand{\epsilon}{\varepsilon}

\setcounter{secnumdepth}{3}



\bibliography{example}

\title{A note on the critical Laplace equation and Ricci curvature}

\author[M.~Fogagnolo]{Mattia Fogagnolo}
\address{M.~Fogagnolo, Centro di Ricerca Matematica Ennio De Giorgi, Scuola Normale Superiore,
Piazza dei Cavalieri 3, 56126 Pisa (PI), Italy}
\email{mattia.fogagnolo@sns.it}

\author[A.~Malchiodi]{Andrea Malchiodi}
\address{A.~Malchiodi, Scuola Normale Superiore,
Piazza dei Cavalieri 7, 56126 Pisa (PI), Italy}
\email{andrea.malchiodi@sns.it}

\author[L.~Mazzieri]{Lorenzo Mazzieri}
\address{L.~Mazzieri, Universit\`a degli Studi di Trento,
via Sommarive 14, 38123 Povo (TN), Italy}
\email{lorenzo.mazzieri@unitn.it}

\begin{document}

\begin{abstract}
We study strictly positive solutions to the critical Laplace equation
\[
- \Delta u = n(n-2) u^{\frac{n+2}{n-2}},
\]
decaying at most like $d(o, x)^{-(n-2)/2}$,
on complete noncompact manifolds $(M, g)$ with nonnegative Ricci curvature, of dimension $n \geq 3$. We prove that, under an additional mild assumption on the volume growth, such a solution does not exist, unless $(M, g)$ is isometric to $\R^n$ and $u$ is a Talenti function. The method employs an elementary analysis of a suitable function defined along the level sets of $u$.
\end{abstract}


\maketitle

\bigskip
\noindent\textsc{MSC (2020): 35R01, 35B33, 53C21, 40E10 
. 
}

\smallskip
\noindent{\underline{Keywords}: critical equations, classification results, manifolds with nonnegative Ricci curvature, level sets}

\section{Introduction and statement of the main results}
The critical Laplace equation
\begin{equation}
\label{laplace-intro}
- \Delta u = n(n-2) u^{\frac{n+2}{n-2}},
\end{equation}
on $\R^n$, with $n \geq 3$,
together its possible generalizations,
has been widely studied in the mathematical literature for many reasons. Let us just mention that entire solutions to \eqref{laplace-intro} provide critical functions for the $L^2$ Sobolev inequality and that $u^{4/(n-2)} g_{\R^n}$ is a metric of positive, constant scalar curvature. Since the works of Obata \cite{obata-critlap}, Talenti \cite{talenti}, Aubin \cite{aubin-isoperimetric}, Gidas-Ni-Nirenberg \cite{gidas-ni-nirenberg}, it became clear that under additional decay or variational assumptions the only entire solutions to \eqref{laplace-intro} are radially symmetric. They are often called \emph{Talentian}. The grounbreaking contribution by Caffarelli-Gidas-Spruck \cite{caffarelli-gidas-spruck} actually established that no additional assumptions are required, except for positivity.

\smallskip

In this note, we face the problem of classifying entire solutions to \eqref{laplace-intro}
on complete noncompact manifolds $(M, g)$ with nonnegative Ricci curvature of dimension $n \geq 3$. We prove that, under a suitable decay assumption on $u$ and an additional mild assumption on the volume growth, such a solution does not exist, unless $(M, g)$ is isometric to $\R^n$ and $u$ is a Talenti function.
\begin{theorem}
\label{main}
Let $(M, g)$ be a complete noncompact Riemannian manifold with nonnegative Ricci curvature, of dimension $n \geq 3$, satisfying
\begin{equation}
\label{volume-condition}
\frac{\abs{B(o, t}}{\abs{B(o, s)}} \geq C \left(\frac{t}{s}\right)^b
\end{equation}
for some $1 < b \leq n$ and for any $t \geq s > 0$. Assume that there exists a global, strictly positive solution $u$ to 
\[
- \Delta u = n(n-2) u^{\frac{n+2}{n-2}},
\]
such that $u(x) d(o, x)^{(n-2)/2}$ is uniformly bounded in $M$ as a function of $x \in M$, for some $o \in M$. Then, $(M, g)$ is isometric to $(\R^n, g_{\R^n})$ and 
\begin{equation}
\label{talentian}
u(x) = \left(\frac{a}{a^2 + d(o, x)^2}\right)^{\frac{(n-2)}{2}},
\end{equation}
for some $a > 0$.
\end{theorem}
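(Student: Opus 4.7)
The strategy is to pass to the conformally associated function $w := u^{-2/(n-2)}$, which converts the critical Laplace equation into the identity $w\Delta w - (n/2)|\nabla w|^2 = 2n$, and to analyze the $P$-function $P := (|\nabla w|^2 + 4)/w$. From the equation one reads off $\Delta w = (n/2)P$; moreover, a direct check shows that $P$ is identically equal to $4/a$ on the Talenti solution \eqref{talentian}. The decay hypothesis on $u$ translates into the linear lower bound $w(x) \geq c\, d(o, x)$.

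The first step is to derive a differential inequality for $P$. Applying Bochner's identity to $|\nabla w|^2 = wP - 4$, substituting $\nabla \Delta w = (n/2)\nabla P$, and using the refined Kato inequality $|\nabla^2 w|^2 \geq (\Delta w)^2/n = (n/4)P^2$ together with $\ric \geq 0$, a direct computation gives
\[ w\Delta P + (2-n)\langle \nabla w, \nabla P\rangle = 2|\nabla^2 w|^2 - (n/2)P^2 + 2\,\ric(\nabla w, \nabla w) \geq 0, \]
equivalently $\dive\bigl(w^{-(n-2)} \nabla P\bigr) \geq 0$. Pointwise equality forces $\nabla^2 w = (\Delta w/n)\,g$ and $\ric(\nabla w, \nabla w) = 0$.

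The heart of the proof is to upgrade this to pointwise equality on $M$, via a suitable level-set function. Integrating the divergence identity on the sublevel set $\{w \leq t\}$ and applying Stokes' theorem yields
\[ \int_{\{w \leq t\}} \frac{2|\nabla^2 w|^2 - (n/2)P^2 + 2\,\ric(\nabla w, \nabla w)}{w^{n-1}}\, dV \;=\; \frac{1}{t^{n-2}} \int_{\{w = t\}} \frac{\langle \nabla P, \nabla w\rangle}{|\nabla w|}\, dS. \]
I would then show that the right-hand side vanishes in the limit $t \to \infty$, by combining the polynomial lower bound \eqref{volume-condition} with the Bishop--Gromov upper bound (the two together pinning down the rate of volume growth), the linear lower bound on $w$, and elliptic estimates for $|\nabla P|$ coming from the $P$-equation. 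Since the left-hand integrand is nonnegative, this forces pointwise equality throughout $M$: $\nabla^2 w = \lambda g$ with $\lambda = \Delta w/n$, and $\ric(\nabla w, \nabla w) = 0$.

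Since $\ric \geq 0$, the vanishing of $\ric(\nabla w, \nabla w)$ forces $\ric(\nabla w, \cdot) \equiv 0$; then the classical commutation $\nabla^i \nabla_i \nabla_k w = \nabla_k \Delta w + R_{kl}\nabla^l w$, applied to $\nabla^2 w = \lambda g$, gives $(1-n)\nabla_k \lambda = 0$, so that $\lambda$ is a positive constant. A standard Obata--Tashiro rigidity result then identifies $(M, g)$ with $(\R^n, g_{\R^n})$ and shows that $w(x) = w(o) + (\lambda/2)\, d(o, x)^2$, from which the Talenti expression \eqref{talentian} for $u = w^{-(n-2)/2}$ follows. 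The main obstacle will be the vanishing of the boundary integral in the limit $t \to \infty$: this is precisely where the volume assumption \eqref{volume-condition} and the slow-decay hypothesis on $u$ must work together to rule out anomalous behavior of $|\nabla P|$ on large level sets of $w$.
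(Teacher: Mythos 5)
Your setup is exactly the paper's: your $w$ is the paper's $v=u^{-2/(n-2)}$, and $w^{-(n-2)}\nabla P$ is precisely twice the vector field $X$ of \eqref{vector-field}, with $\dive(w^{-(n-2)}\nabla P)=2v^{1-n}\bigl(|\nabla\nabla v-\tfrac{\Delta v}{n}g|^2+\ric(\nabla v,\nabla v)\bigr)$; the concluding Obata--Tashiro rigidity also matches Lemma \ref{lemma-splitting}. The problem is that you have only asserted, not proved, the one step that carries all the difficulty: the vanishing of the boundary flux $t^{-(n-2)}\int_{\{w=t\}}\langle\nabla P,\nu\rangle\,dS$ as $t\to\infty$. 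The tools you invoke for it do not suffice. The volume condition \eqref{volume-condition} together with Bishop--Gromov does not ``pin down'' the growth rate (it only gives $r^b\lesssim|B(o,r)|\lesssim r^n$ with possibly $b<n$), and its actual role in the paper is quite different: via Minerbe's result it guarantees that points in an annulus can be joined by geodesics staying in a comparable annulus, which feeds a Harnack-type argument. Moreover, there is no ``elliptic estimate for $|\nabla P|$'' available: $P$ satisfies only a differential inequality, and the term $\langle\nabla P,\nu\rangle$ contains $\nabla|\nabla w|^2$, i.e.\ the full Hessian of $w$, for which the paper only obtains \emph{integral} $L^2$ bounds on annuli (via Bochner plus cut-offs, Proposition \ref{hessian-prop}), not pointwise bounds on level sets. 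Working on level sets $\{w=t\}$ also leaves you without area control and without regularity (Sard plus an absolute-continuity argument are needed, as in the paper's treatment of $V(s)=\int_{\{v=s\}}|\nabla v|^3\,d\sigma$).

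Concretely, the paper closes this gap in two stages that are absent from your proposal. First, an intermediate theorem under a finite-energy hypothesis $\int_M|\nabla u|^2<\infty$: if the flux did not vanish along any sequence, the level-set function $V(s)$ would satisfy $V'(s)\geq\delta s^{n-1}$, hence $V(s)\gtrsim s^n$, which contradicts finite energy once one knows $|\nabla v|^2\leq Cv$ (a consequence of the Yau-type gradient estimate and the barrier lower bound $u\gtrsim r^{2-n}$). Second, under your hypotheses the finite-energy condition is not assumed, so it must be \emph{derived}: the paper integrates $\dive X$ over geodesic balls instead of sublevel sets, uses the Harnack argument on annuli (this is where \eqref{volume-condition} with $b>1$ enters), the gradient decay, the integral Hessian decay, and a coarea averaging to select good radii $\hat R_j$ along which all boundary terms decay like negative powers of $\hat R_j$; this improves the decay to $u=o(r^{-(n-2)/2-\epsilon})$, which in turn yields $\int_M|\nabla u|^2<\infty$ and lets the first stage apply. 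Until you supply a mechanism of comparable strength for killing the boundary term, your argument does not close.
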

We observe that the volume condition \eqref{volume-condition} is satisfied any time 
\[
C^{-1} r^b \leq \abs{B(o, r)} \leq C r^b
\]
for $1 < b \leq n$ and for any $r \geq 1$. In particular, Euclidean volume growth manifolds fulfill it.

\smallskip

As a consequence of the previous result, apart from the Euclidean case, the Ricci-flat manifolds $(M, g)$ fulfilling the above assumptions do not enjoy conformal metrics $\tilde{g} = u^{4/(n-2)} g$ of positive constant scalar curvature with conformal factor decaying at infinity as above. In particular, this applies to ALE (such as Eguchi-Hanson), ALF and ALG gravitational instantons. We refer the interested reader to \cite{minerbe2}, \cite{chen-instantons1}, \cite{chen-instantons2}, \cite{chen-instantons3}. 
 
We point out that, to our knowledge, a study of \eqref{laplace-intro} on manifolds with nonnegative Ricci curvature  was yet to be addressed in literature. In the case of subcritical exponent $1 \leq \alpha < (n+2)/(n-2)$ in the right hand side of \eqref{laplace-intro}, it has been shown in \cite[Theorem 1.2]{gidas-spruck} that entire solutions are not allowed. Theorem \ref{main} can thus be interpreted as a first extension of such study to the critical regime.
Concerning the case of complete Riemannian manifolds with other curvature assumptions, we mention \cite{muratori-soave} by Muratori-Soave, who considered solutions to \eqref{laplace-intro} that are radially symmetric or that minimize the Sobolev quotient in Cartan-Hadamard manifolds.

\smallskip

Our proof combines elementary computations on a vector field with nonnegative divergence already considered by \cite{obata-critlap} and \cite{gidas-spruck}, a Harnack-type argument triggered by the decay assumption coupled with a Yau-type gradient estimate, and a novel auxiliary function $V$ defined along the level sets of $u$. In proving Theorem \ref{main}, we first show an intermediate result that adds to the decay assumption on $u$ a \emph{finite energy condition}, that is to say
\begin{equation}
\label{finiteenergy}
\int_M \abs{\nabla u}^2 \dd\mu < + \infty,
\end{equation}
but does not require any volume restriction.
It reads as follows.
\begin{theorem}
\label{main-energy}
Let $(M, g)$ be a complete noncompact Riemannian manifold with nonnegative Ricci curvature and of dimension $n \geq 3$. Assume that there exists a global, strictly positive \emph{finite energy} solution $u$ to 
\[
- \Delta u = n(n-2) u^{\frac{n+2}{n-2}},
\]
such that $u(x) d(o, x)^{(n-2)/2}$ is uniformly bounded in $M$ as a function of $x \in M$.
Then, $(M, g)$ is isometric to $(\R^n, g_{\R^n})$ and 
\begin{equation}
u(x) = \left(\frac{a}{a^2 + d(o, x)^2}\right)^{\frac{(n-2)}{2}},
\end{equation}
for some $a > 0$.
\end{theorem}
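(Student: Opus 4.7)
The plan is to reduce Theorem~\ref{main-energy} to an Obata-type rigidity statement obtained from a divergence identity. Setting $v=u^{-2/(n-2)}$, a direct computation turns the critical equation into
\[
v\,\Delta v - \frac{n}{2}\,\abs{\nabla v}^2 = 2n,
\]
and the Talentian solutions correspond precisely to $v(x) = a + d(x,o)^2/a$ in Euclidean coordinates, so that $\mathrm{Hess}\,v = \frac{\Delta v}{n}\,g$ with $\Delta v/n$ constant. Following the scheme of Obata and Gidas-Spruck, I would build a vector field $X$ algebraically from $v$, $\nabla v$ and $\mathrm{Hess}\,v$ for which, using the Bochner identity and the transformed equation,
\[
\dive X \;\geq\; w(v)\,\Bigl(\,\bigl|\,\mathrm{Hess}\,v - \tfrac{\Delta v}{n}\,g\,\bigr|^2 + \ric(\nabla v,\nabla v)\,\Bigr)
\]
for a nonnegative weight $w(v)$. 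Nonnegative Ricci then yields $\dive X\geq 0$ pointwise on $M$, with equality forcing the traceless Hessian of $v$ to vanish everywhere.

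The heart of the argument is to prove that the flux of $X$ at infinity vanishes along a suitable sequence of radii, so that integrating $\dive X\geq 0$ over all of $M$ forces pointwise equality. For this I would first exploit the decay hypothesis together with a Cheng-Yau gradient estimate available under $\ric\geq 0$, enhanced by a Harnack-type argument as hinted in the introduction, to upgrade $u(x)\lesssim d(o,x)^{-(n-2)/2}$ to the matching first-order bound $\abs{\nabla u}(x)\lesssim d(o,x)^{-n/2}$. These translate, through $v=u^{-2/(n-2)}$, into the expected linear growth of $v$ and bounded $\abs{\nabla v}$. The novel auxiliary function $V$, defined along the level sets $\{u=t\}$, would then enter as the device converting the finite energy hypothesis $\int_M\abs{\nabla u}^2\,d\mu < \infty$ into the flux-vanishing statement: $V$ would be designed so that its monotonicity along the downward flow of $u$, combined with the $L^2$-bound from finite energy, forces a subsequence of level sets on which the boundary integral of $X$ tends to zero.

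Once the flux vanishes, equality in $\dive X\geq 0$ holds everywhere, yielding $\mathrm{Hess}\,v = \tfrac{\Delta v}{n}\,g$ and $\ric(\nabla v,\nabla v)=0$ on $M$. Because the transformed PDE forces $\Delta v/n$ to be a positive constant, $v$ must be a quadratic function of a distance from a base point, and the Obata-Tashiro-type rigidity for complete noncompact Riemannian manifolds admitting such a function identifies $(M,g)$ with $(\R^n, g_{\R^n})$. Reverting through $u = v^{-(n-2)/2}$ then produces the Talentian profile \eqref{talentian}.

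The main obstacle is the flux-vanishing step. The decay of $u$ alone is insufficient: it must be paired with the sharp first-order bound from the gradient estimate, and the level-set function $V$ must be engineered so that the finite energy hypothesis can be used decisively to select a favourable sequence of radii. This is where the novelty of the proof lies, and also where, as noted in the introduction, the weakening from finite energy to the sole volume growth condition \eqref{volume-condition} becomes the significantly harder task treated in Theorem~\ref{main}.
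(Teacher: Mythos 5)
Your overall architecture is the paper's: pass to $v=u^{-2/(n-2)}$, use the Obata--Gidas--Spruck vector field whose divergence equals $v^{1-n}\bigl(\abs{\nabla\nabla v-\tfrac{\Delta v}{n}g}^2+\ric(\nabla v,\nabla v)\bigr)$, show the flux vanishes along a sequence, and conclude by a splitting/Tashiro-type rigidity lemma. But the decisive step --- converting finite energy into flux vanishing --- is only described as something $V$ ``would be designed'' to accomplish; as written this is a statement of intent, not an argument, and it is precisely the content of the proof. To close the gap you need the actual mechanism: take $V(s)=\int_{\{v=s\}}\abs{\nabla v}^3\,\dd\sigma$, verify (via the divergence theorem and the structure of the critical set) that $V$ is absolutely continuous, and observe that the flux of $X$ through $\{v=s\}$ equals $\tfrac12 V'(s)s^{1-n}-\tfrac{n+2}{4}V(s)s^{-n}-(n+2)s^{-n}\int_{\{v=s\}}\abs{\nabla v}\,\dd\sigma$. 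Then argue by contradiction: if the flux stays bounded below by some $\delta>0$, then $V'(s)\geq\delta s^{n-1}$ for a.e.\ $s$, hence $V(s)\gtrsim s^n$ for large $s$. On the other hand, the Yau gradient estimate together with the two-sided bounds $r\lesssim v\lesssim r^2$ (the lower bound on $v$ from the decay hypothesis, the upper bound from the barrier estimate $u\gtrsim r^{2-n}$) gives $\abs{\nabla v}^2\leq Cv=Cs$ on $\{v=s\}$, so that $\int_{\{v=s\}}\abs{\nabla v}\,\dd\sigma\geq C^{-1}s^{-1}V(s)\gtrsim s^{n-1}$; by the coarea formula this makes $\int_M\abs{\nabla u}^2\,\dd\mu=\int\!\!\int_{\{v=s\}}\abs{\nabla v}s^{-n}\,\dd\sigma\,\dd s$ divergent, contradicting finite energy.

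Two smaller points. First, your claim that the gradient estimate yields \emph{bounded} $\abs{\nabla v}$ (equivalently $\abs{\nabla u}\lesssim r^{-n/2}$ with matching linear growth of $v$) is not available at this stage: without a Harnack-type argument one only knows $v\lesssim r^2$, hence only $\abs{\nabla v}\lesssim v/r\lesssim r$; fortunately the proof above needs only $\abs{\nabla v}^2\lesssim v$ on level sets, not boundedness. Second, the Harnack argument you invoke plays no role in Theorem \ref{main-energy}; in the paper it enters only in Theorem \ref{main}, where the volume condition \eqref{volume-condition} supplies connected annuli along which $\log u$ has bounded oscillation, upgrading the decay of $u$ so that finite energy can be deduced rather than assumed.
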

It is worth pointing out that, thanks to the decay estimate on the gradient that we observe in  Proposition \ref{decaygrad-prop}, if the decay on $u$ is reinforced to $u \leq C d(o, x)^{(2-n)/2 - \epsilon}$, for $\epsilon > 0$, then \eqref{finiteenergy} is just a consequence of this. In particular, Theorem \ref{main-energy} fully recovers the main result in \cite{gidas-ni-nirenberg}, and extends it to a rigidity statement in the geometry of nonnegative Ricci curvature. 

\medskip

\textbf{Summary.} In Section 2 we gather some basic estimates about $u$ and its derivatives. In Section 3 we introduce the fundamental vector field with nonnegative divergence and set its connection with the auxiliary function $V$, and finally in Section 4 we prove Theorems \ref{main-energy} and  \ref{main}.

\medskip

\begin{center}
{\emph{Added note}}
\end{center}
During the checking process of the presentation of the manuscript, the preprint \cite{catino-monticelli} appeared on the arXiv. It refines and generalizes our results, with different methods.

\section{Preliminary a priori estimates}
In this section we work out some preliminary estimates in force for solutions to \eqref{laplace-intro}, that we later particularize for those such that $u(x) d(o, x)^{(n-2)/2}$ is uniformly bounded. 

First of all, we crucially observe that the technique Yau \cite{yau} pioneered in order to obtain gradient estimates for harmonic functions under lower Ricci curvature bounds yields for solutions to the critical Laplace equation the following inequality.
\begin{proposition}[Yau-type estimate for solutions to the critical Laplace equation]
\label{yau}
Let $(M, g)$ be a complete, noncompact Riemannian manifold with nonnegative Ricci curvature of dimension $n \geq 3$. Then, a solution to \eqref{laplace-intro} satisfies
\begin{equation}
\label{yauf}
\sup_{B(x, R)} \frac{\abs{\nabla u}^2}{u^2} \leq C \left[\frac{1}{R^2} + \sup_{B(x, 2R)} u^{\frac{4}{(n-2)}}\right]
\end{equation}
for any $x \in M$, for any $R \geq 1$ and for some dimensional constant $C > 0$.
\end{proposition}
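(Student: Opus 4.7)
\smallskip

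The plan is to mimic the classical Yau gradient estimate, replacing the harmonic hypothesis by the critical equation; the nonlinear term will produce a bulk contribution of the form $\sup u^{4/(n-2)}$ that one must absorb at the maximum-point step.

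First I would set $f = \log u$ and $w = |\nabla f|^2 = |\nabla u|^2/u^2$, so that the equation \eqref{laplace-intro} translates into
\[
\Delta f \;=\; \frac{\Delta u}{u}-\frac{|\nabla u|^2}{u^2} \;=\; -\,n(n-2)\,u^{\frac{4}{n-2}} - w,
\]
and, differentiating,
\[
\nabla \Delta f \;=\; -\,4n\, u^{\frac{4}{n-2}}\,\nabla f - \nabla w.
\]
Then I would apply Bochner's formula to $f$: using $\mathrm{Ric}\ge 0$ and $|\nabla^2 f|^2\ge (\Delta f)^2/n$, one gets
\[
\tfrac{1}{2}\Delta w \;\ge\; \tfrac{1}{n}(\Delta f)^2 + \langle \nabla \Delta f, \nabla f\rangle \;=\; \tfrac{1}{n}\bigl(n(n-2)u^{\frac{4}{n-2}}+w\bigr)^{2} - 4n\,u^{\frac{4}{n-2}} w - \langle \nabla w,\nabla f\rangle.
\]
Writing $h := u^{4/(n-2)}$ and expanding the square, this yields a differential inequality of the schematic form
\[
\tfrac12 \Delta w + \langle \nabla w,\nabla f\rangle \;\ge\; \tfrac{w^{2}}{n} - c_{n}\, h\, w - c_{n}' \,h^{2},
\]
for dimensional constants $c_{n},c_{n}'>0$ (the $2(n-2) w h$ cross term combines with $-4nwh$ to leave a negative multiple of $wh$).

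Next I would localize the estimate by means of a standard Li--Yau type cut-off $\eta\colon M\to[0,1]$ with $\eta\equiv 1$ on $B(x,R)$, $\spt\eta\subset B(x,2R)$ and
\[
|\nabla \eta|^{2}/\eta \le C/R^{2}, \qquad \Delta \eta \ge -C/R^{2},
\]
where the Hessian-type bound is derived exactly as in Yau's argument from the Laplacian comparison theorem under $\mathrm{Ric}\ge 0$. I would then consider the function $\eta w$ on $B(x,2R)$ and, assuming it is positive somewhere (otherwise \eqref{yauf} is trivial), let $p\in B(x,2R)$ be one of its maximum points. At $p$ we have $\nabla(\eta w)=0$, hence $\nabla w = -(w/\eta)\nabla\eta$, and $\Delta(\eta w)\le 0$, which after distributing $\Delta$ and multiplying through by $\eta$ produces a pointwise inequality
\[
0 \;\ge\; \eta^{2}\bigl(\tfrac{w^{2}}{n}-c_{n}h w - c_{n}'h^{2}\bigr) - C\,\frac{(\eta w)}{R^{2}} - C\,\eta w \cdot \eta |\nabla f|\cdot \frac{1}{R},
\]
after bounding the $\langle \nabla w,\nabla f\rangle$ and $\Delta\eta\cdot w$ contributions using the cut-off properties. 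Treating the last two terms by Young's inequality against $\eta^{2} w^{2}/n$, I would be left with
\[
\eta^{2} w^{2}(p) \;\le\; C\Bigl[\tfrac{1}{R^{4}} + \sup_{B(x,2R)} h^{2}\Bigr],
\]
and, since $\eta\equiv 1$ on $B(x,R)$, this gives $w\le C[R^{-2}+\sup_{B(x,2R)}h]$ on $B(x,R)$, which is precisely \eqref{yauf}.

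The main obstacle I foresee is bookkeeping the nonlinear cross terms so that the quadratic term $w^{2}/n$ survives with a positive coefficient after the maximum-point absorption: the presence of $-c_{n} h w$ and of the drift coming from $\langle \nabla w,\nabla f\rangle$ both want a piece of $w^{2}$, and one must split $w^{2}/n$ carefully (with $\epsilon$--Young on the $R^{-1}\eta w\,|\nabla f|$ term and another $\epsilon$--Young on $hw$) to close the inequality. Once this is done, the estimate \eqref{yauf} is immediate.
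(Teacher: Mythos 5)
Your proposal is correct and is exactly the argument the paper has in mind: the paper omits the proof, stating only that it follows Li's eigenfunction gradient estimate, and your computation carries that out, with the only new feature (the non-constant ``eigenvalue'' $n(n-2)u^{4/(n-2)}$) correctly handled since the cross term $2(n-2)hw$ from expanding $(\Delta f)^2/n$ dominates neither $-4nhw$ nor the retained $w^2/n$. The remaining points you gloss over --- Calabi's trick if the maximum point of $\eta w$ lies on the cut locus, and the fact that the expansion actually produces $+n(n-2)^2h^2$ so your $-c_n'h^2$ is merely a harmless weakening --- are standard and do not affect the conclusion.
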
   
The proof of \eqref{yauf} is obtained by following very closely the one for eigenfunctions proposed for \cite[Theorem 1.1]{li-lecture} and, as such, we omit the proof.

As a consequence of the above result, we have the following decay estimate for $\abs{\nabla u}$ when $u$ satisfies the same bound as in Theorem \ref{main}. Here and in the sequel of the paper, we consider $o \in M$ to be any fixed point, and use the notation $r(x) = d(o, x)$ for $x \in M$.
\begin{corollary}[Decay of the gradient]
\label{decaygrad-prop}
Let $(M, g)$ be a complete, noncompact Riemannian manifold with nonnegative Ricci curvature of dimension $n \geq 3$, and let $u$ be a solution to \eqref{laplace-intro} that satisfies $u  \leq C r^{-(n-2)/2}$. Then,
\begin{equation}
\label{decay-grad}
\frac{\abs{\nabla u}^2}{u^2} (x) \leq C \frac{1}{r(x)^2}
\end{equation}
for any $x$ such that $r(x) = d(o, x) \geq 4$.
\end{corollary}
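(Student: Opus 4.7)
The plan is to apply the Yau-type estimate of Proposition \ref{yau} at the point $x$ with a radius $R$ chosen carefully in terms of $r(x)$. Specifically, I would set $R = r(x)/4$, so that the constraint $R \geq 1$ in the hypothesis of Proposition \ref{yau} translates exactly to $r(x) \geq 4$, which is the assumption of the corollary.

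With this choice, $B(x, 2R) = B(x, r(x)/2)$, and by the triangle inequality every $y \in B(x, 2R)$ satisfies $r(y) \geq r(x) - r(x)/2 = r(x)/2$. Using the decay hypothesis $u(y) \leq C r(y)^{-(n-2)/2}$, I then estimate
\[
\sup_{B(x, 2R)} u^{\frac{4}{n-2}} \;\leq\; C \sup_{B(x,2R)} r(y)^{-2} \;\leq\; C\left(\frac{r(x)}{2}\right)^{-2} \;\leq\; \frac{C}{r(x)^2}.
\]
Since also $1/R^2 = 16/r(x)^2$, plugging both bounds into the Yau inequality \eqref{yauf} yields
\[
\frac{\abs{\nabla u}^2}{u^2}(x) \;\leq\; \sup_{B(x,R)} \frac{\abs{\nabla u}^2}{u^2} \;\leq\; C\!\left[\frac{1}{R^2} + \sup_{B(x, 2R)} u^{\frac{4}{n-2}}\right] \;\leq\; \frac{C}{r(x)^2},
\]
which is the claimed bound \eqref{decay-grad}.

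This argument is essentially a one-step consequence of Proposition \ref{yau}, so there is no serious obstacle; the only point requiring care is calibrating the radius $R$ so that (i) the ball $B(x, 2R)$ does not come close to the origin (to keep the decay hypothesis on $u$ effective), and (ii) $R \geq 1$, which is needed to invoke Proposition \ref{yau}. Both are handled simultaneously by the choice $R = r(x)/4$ together with the restriction $r(x) \geq 4$.
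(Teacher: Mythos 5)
Your argument is correct and coincides with the paper's proof: the paper likewise applies Proposition \ref{yau} with $2R = r(x)/2$ (i.e.\ $R = r(x)/4$) and uses the triangle inequality to bound $r(y)$ from below on $B(x, r(x)/2)$ before invoking the decay hypothesis on $u$. Your lower bound $r(y) \geq r(x)/2$ is in fact slightly cleaner than the paper's stated $r(y_x) \geq r(x)/3$; nothing further is needed.
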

\begin{proof}
It suffices to apply Proposition \ref{yau} with $2R = r(x)/2$. Coupled with the decay assumption on $u$, it yields
\begin{equation}
\label{passaggio-decay-yau}
\frac{\abs{\nabla u}^2}{u^2} \leq C\left[\frac{1}{r(x)^2} + u^{\frac{4}{n-2}}(y_x)\right] \leq C\left[\frac{1}{r(x)^2} + \frac{1}{r(y_x)^2}\right],
\end{equation}
for some $y_x \in B(x, r(x)/2)$. On the other hand, we have $r(x)/3 \leq r(y_x) \leq 2r(x)$, and plugging this information into \eqref{passaggio-decay-yau} leaves us with \eqref{decay-grad}.
\end{proof}
Through integrating by parts the Bochner formula, we do now work out an integral estimate on the Hessian of $u$. The computations, that in our case is particularly simple, got some inspiration from analogous ones in the celebrated work \cite{Che-Cold}. Here and in the remainder of this paper we consider $o \in M$ to be fixed and denote with $A_{R_1, R_2}$ the open annulus $B(o, R_2) \setminus \overline B(o, R_1)$. 
\begin{proposition}
\label{hessian-prop}
Let $(M, g)$ be a complete, noncompact Riemannian manifold with nonnegative Ricci curvature of dimension $n \geq 3$. Then, for any $k > 1$ there exist numbers $1 < \alpha < \beta < k$ such that a solution to \eqref{laplace-intro} satisfies, for any $R \geq 1$,
\begin{equation}
\label{hessianf}
\int\limits_{A_{\alpha R, \beta R}} \abs{\nabla \nabla u}^2 \dd\mu \leq C \left(R^{-2}\int\limits_{A_{R, kR}} \abs{\nabla u}^2 \dd\mu  + \int\limits_{A_{R, kR}}u^{\frac{4}{(n-2)}} \abs{\nabla u}^2 \dd\mu\right),
\end{equation}
for some constant $C$ not depending on $R$.
\end{proposition}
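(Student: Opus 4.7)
The plan is a Bochner formula plus cutoff integration-by-parts argument. Two structural features make it go through cleanly: the nonnegativity of $\mathrm{Ric}$ lets us drop the curvature term in Bochner, and differentiating the equation gives $\nabla\Delta u=-n(n+2)\,u^{4/(n-2)}\nabla u$, so that $\langle\nabla u,\nabla\Delta u\rangle$ reproduces, up to sign, the second integrand on the right-hand side of \eqref{hessianf}.

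Concretely, starting from
\[
\tfrac{1}{2}\Delta|\nabla u|^2 \;=\; |\nabla^2 u|^2 + \langle\nabla u,\nabla\Delta u\rangle + \mathrm{Ric}(\nabla u,\nabla u),
\]
the two observations above give the pointwise inequality
\[
|\nabla^2 u|^2 \;\leq\; \tfrac{1}{2}\Delta|\nabla u|^2 \,+\, n(n+2)\,u^{4/(n-2)}|\nabla u|^2.
\]
Next, I would fix any $1<\alpha<\beta<k$ (for definiteness, $\alpha=1+(k-1)/3$ and $\beta=1+2(k-1)/3$) and take $\eta(x)=\chi(r(x)/R)$, where $\chi\colon\R\to[0,1]$ is a smooth bump equal to $1$ on $[\alpha,\beta]$ and supported in $(1,k)$, so that $|\nabla\eta|\leq C/R$ on $A_{R,kR}$. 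Multiplying the above inequality by $\eta^2$ and applying Green's identity on the Laplacian term produces
\[
\int \eta^2|\nabla^2 u|^2\,\dd\mu \;\leq\; -\int \eta\,\langle\nabla\eta,\nabla|\nabla u|^2\rangle\,\dd\mu \,+\, n(n+2)\int \eta^2 u^{4/(n-2)}|\nabla u|^2\,\dd\mu.
\]
Kato's inequality $\bigl|\nabla|\nabla u|^2\bigr|\leq 2|\nabla u||\nabla^2 u|$ and Young's inequality with a small parameter then let me absorb a fraction of $\int\eta^2|\nabla^2 u|^2$ into the left-hand side, while the remaining cutoff contribution is controlled by $CR^{-2}\int_{A_{R,kR}}|\nabla u|^2\,\dd\mu$ via $|\nabla\eta|\leq C/R$. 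Since $\eta\equiv 1$ on $A_{\alpha R,\beta R}$, the claimed estimate follows.

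The only mildly delicate point is that $r$ is merely Lipschitz away from the cut locus, so $\eta$ is Lipschitz rather than smooth. In nonnegative Ricci curvature the cut locus has measure zero and $r$ is smooth off it, so either Rademacher's theorem combined with a standard smooth approximation, or a preliminary mollification of $\eta$, legitimises the integration by parts above (all integrations are over the bounded region $A_{R,kR}$, so no issue at infinity arises). I expect this smoothing step to be the only genuinely non-routine aspect; the remainder is a direct computation that owes nothing to any decay or finite-energy hypothesis on $u$.
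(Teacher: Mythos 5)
Your proof is correct, and it takes a slightly but genuinely different route from the paper's. The paper multiplies the Bochner inequality by a special ``Euclidean-like'' annular cutoff $\phi$ satisfying not only $|\nabla\phi|\leq C\phi^{1/2}/R$ but also $|\Delta\phi|\leq C/R^2$, and integrates by parts \emph{twice}, moving the full Laplacian onto the cutoff so that the term becomes $\tfrac12\int|\nabla u|^2\Delta\phi$. Such a cutoff with controlled Laplacian on annuli is a nontrivial object in nonnegative Ricci curvature (the paper invokes the Cheeger--Colding/Bianchi--Setti construction), and its existence is precisely why the statement is phrased as ``there exist $\alpha<\beta$'' rather than for all. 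You instead integrate by parts only \emph{once}, then use $\bigl|\nabla|\nabla u|^2\bigr|\leq 2|\nabla u||\nabla^2 u|$ and Young's inequality to absorb half of $\int\eta^2|\nabla^2u|^2$ back into the left-hand side; this requires only $|\nabla\eta|\leq C/R$, which the elementary cutoff $\chi(r/R)$ provides, and consequently you get the estimate for \emph{arbitrary} $1<\alpha<\beta<k$, a marginally stronger and more self-contained conclusion. Your one flagged worry --- the mere Lipschitz regularity of $r$ --- is even less of an issue than you suggest: since you only integrate by parts once, you need the identity $\int\eta^2\Delta f\,\dd\mu=-\int\langle\nabla(\eta^2),\nabla f\rangle\,\dd\mu$ for a compactly supported Lipschitz function $\eta^2$ against the smooth function $f=|\nabla u|^2$, which is just the weak form of the divergence theorem and needs no mollification or cut-locus discussion; the regularity of $r$ only becomes delicate when, as in the paper's route, one needs pointwise or distributional control on $\Delta\phi$.
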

\begin{proof}
By the Bochner formula and by the nonnegativity of the Ricci tensor, we have
\begin{equation}
\label{bochner-decay}
\abs{\nabla \nabla u}^2 \leq \frac{1}{2} \Delta \abs{\nabla u}^2 - \langle\nabla \Delta u, \nabla u\rangle.
\end{equation}
We multiply both sides by an annular Euclidean-like cut-off function
$\phi$ supported in $B(o, kR) \setminus \overline{B(o, R)}$  such that
\begin{align}
\label{annular-cutoff}
\phi \equiv 1 \, \,&\text{on}  \,\,  B(o, \beta R) \setminus \overline{B(o, \alpha R)}, \\
 \abs{\nabla \phi} &\leq C \, \frac{\phi^{1/2}}{R}, \\
 \abs{\Delta \phi} &\leq C \, \frac{1}{R^2}.
\end{align}
for $1 < \alpha < \beta < k$. Such function is well known to exist since \cite{Che-Cold}, and can be more precisely built by slightly re-adapting the proof of \cite[Corollary 2.3]{bianchi-setti}. 
Integrating by parts twice the first term  on the right-hand side, we get
\[
\begin{split}
\int\limits_{A_{\alpha R, \beta R}} \abs{\nabla \nabla u}^2 \dd\mu &\leq \int\limits_{A{r, k R}} \abs{\nabla \nabla u}^2 \phi \dd\mu \\
&\leq \frac{1}{2} \int\limits_{A{r, k R}}\abs{\nabla u^2} \Delta \phi + \int\limits_{A_{R, kR}}u^{\frac{4}{(n-2)}} \abs{\nabla u}^2 \phi \dd\mu \\ 
&\leq C\left(R^{-2}\int\limits_{A_{R, kR}} \abs{\nabla u}^2 \dd\mu  + \int\limits_{A_{R, kR}}u^{\frac{4}{(n-2)}} \abs{\nabla u}^2 \dd\mu\right), 
\end{split}
\]
as claimed.
\end{proof}
When $u$ is assumed to satisfy $u \leq Cr^{-(n-2)/2}$ we get the following integral decay estimate for its Hessian.
\begin{corollary}[Integral decay of the Hessian]
\label{hessian-decay-corollary}
Let $(M, g)$ be a complete, noncompact Riemannian manifold with nonnegative Ricci curvature of dimension $n \geq 3$, and let $u$ be a solution to \eqref{laplace-intro} that satisfies $u  \leq C r^{-(n-2)/2}$. Then, for any $k > 1$ there exist numbers $1 < \alpha < \beta < k$ such that a solution to \eqref{laplace-intro} satisfies, for any $R \geq 1$,
\begin{equation}
\label{decay-hess}
\int\limits_{A_{\alpha R, \beta R}} \abs{\nabla \nabla u}^2 \dd\mu \leq \frac{C}{R^{2}}\int\limits_{A_{R, kR}} \abs{\nabla u}^2 \dd\mu \leq \frac{C}{R^2},
\end{equation}
for some positive constant $C$.
\end{corollary}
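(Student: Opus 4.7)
The plan is to run Proposition \ref{hessian-prop} and then dispose of each of the two terms on its right-hand side using the two available decay inputs: the pointwise bound $u \leq C r^{-(n-2)/2}$ (in order to absorb the nonlinear term into the linear one) and the gradient decay from Corollary \ref{decaygrad-prop} (in order to control the resulting integral of $|\nabla u|^2$ by a constant independent of $R$).

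For the first inequality in \eqref{decay-hess}, I would simply observe that the decay assumption on $u$ gives
\[
u^{\frac{4}{n-2}}(x) \leq C\, r(x)^{-2} \leq C\, R^{-2}
\]
uniformly on the annulus $A_{R, kR}$, since $r(x) \geq R$ there. Plugging this pointwise inequality into the second term of \eqref{hessianf} shows that it is dominated by $C R^{-2} \int_{A_{R, kR}} |\nabla u|^2 \, d\mu$, which is the same form as the first term. Combining both yields the first bound in \eqref{decay-hess}.

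For the second inequality in \eqref{decay-hess}, I would use Corollary \ref{decaygrad-prop} together with the decay of $u$ to produce the pointwise estimate
\[
|\nabla u|^2(x) \leq C\, \frac{u^2(x)}{r(x)^2} \leq \frac{C}{r(x)^n}
\]
for $r(x) \geq 4$, which one can assume by taking $R$ larger than some absolute constant (the case of bounded $R$ is trivial since everything is smooth). Integrating on the annulus and using that the Bishop--Gromov theorem provides $|A_{R, kR}| \leq |B(o, kR)| \leq C R^n$ on manifolds with nonnegative Ricci curvature, we get
\[
\int_{A_{R, kR}} |\nabla u|^2 \, d\mu \;\leq\; C \int_{A_{R, kR}} \frac{1}{r^n}\, d\mu \;\leq\; \frac{C}{R^n} \cdot |A_{R, kR}| \;\leq\; C,
\]
with $C$ independent of $R$. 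Dividing by $R^2$ yields the second inequality.

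There is no real obstacle here: both inequalities are direct consequences of already-established estimates. The only mild point worth noting is that although we only need the \emph{upper} Bishop--Gromov comparison to bound the volume of $A_{R, kR}$, the factor of $R^n$ is exactly what cancels the $R^{-n}$ coming from the pointwise gradient decay, producing a uniform constant on the right-hand side. If one wanted a decay rate rather than just uniform boundedness, a finer lower bound on $|B(o, R)|$ (as in the volume hypothesis \eqref{volume-condition}) would be needed; here it is not, which is why the corollary can be stated without any volume growth assumption.
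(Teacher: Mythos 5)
Your argument is correct and coincides with the paper's own proof: the first inequality comes from plugging $u^{4/(n-2)} \leq C R^{-2}$ into \eqref{hessianf}, and the second from combining \eqref{decay-grad} with the Bishop--Gromov bound $\abs{A_{R,kR}} \leq \omega_n (kR)^n$. Your extra remark about restricting to $r(x)\geq 4$ and handling small $R$ separately is a minor point the paper leaves implicit, but it does not change the argument.
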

\begin{proof}
The first inequality just follows by plugging the assumption on $u$ into \eqref{hessianf}, the second one by coupling it with \eqref{decay-grad}, and observing that, by Bishop-Gromov monotonicity, we have
\[
\abs{A_{r, kr}} \leq \abs{B(o, kR)} \leq \omega_n (kR)^n,
\]
where $\omega_n$ is the Lebesgue measure of the unit ball in $\R^n$.
\end{proof}
We close this preliminary section by recording a basic and fundamental control on $u$ from below.
\begin{proposition}[Control from below for $u$]
Let $(M, g)$ be a complete, noncompact Riemannian manifold with nonnegative Ricci curvature of dimension $n \geq 3$, and let $u$ be a solution to \eqref{laplace-intro} that goes to zero at infinity. Then, there exists a strictly positive constant $C$ such that
\begin{equation}
\label{control-below-u}
u(x) \geq C r(x)^{2-n}
\end{equation}
for any $x \in M \setminus B(o, 1)$.
\end{proposition}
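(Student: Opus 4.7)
The plan is to compare $u$ with the Green function of $M$ via the minimum principle. Since $-\Delta u = n(n-2) u^{(n+2)/(n-2)} \geq 0$ and $u > 0$, the function $u$ is a strictly positive superharmonic function on $M$ tending to zero at infinity; in particular, it is non-constant. By the classical characterization of parabolicity---every positive superharmonic function on a parabolic manifold must be constant---the manifold $M$ is therefore \emph{non-parabolic}, and consequently admits a positive minimal Green function $G(o, \cdot)$ with pole at $o$.

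Next, I would establish the pointwise lower bound $G(o, x) \geq C r(x)^{2-n}$. This follows from the Li-Yau Gaussian lower bound for the heat kernel on manifolds with $\mathrm{Ric} \geq 0$,
\[
p(t, o, x) \geq \frac{c}{\abs{B(o, \sqrt{t})}} \exp\left(-\frac{r(x)^2}{c\,t}\right),
\]
combined with the Bishop--Gromov upper bound $\abs{B(o, \sqrt{t})} \leq \omega_n\, t^{n/2}$, by integrating $p(t, o, x)$ over $t \in [r(x)^2, +\infty)$ and using $n \geq 3$.

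Set $\kappa := \min_{\partial B(o, 1)} u \,/\, \max_{\partial B(o, 1)} G(o, \cdot) > 0$, and consider the function $v := u - \kappa\, G(o, \cdot)$ on $M \setminus \overline{B(o, 1)}$. Since $G(o, \cdot)$ is harmonic away from $o$, we have $-\Delta v = n(n-2) u^{(n+2)/(n-2)} \geq 0$, so $v$ is superharmonic there; moreover $v \geq 0$ on $\partial B(o, 1)$ by the choice of $\kappa$, and $v \to 0$ at infinity because both $u$ and $G(o, \cdot)$ do. A standard minimum principle argument---if $\inf v$ were strictly negative, the decay at infinity together with the boundary condition would force the infimum to be attained at an interior point of some compact annular region, contradicting the strong minimum principle---then yields $v \geq 0$ throughout $M \setminus \overline{B(o, 1)}$. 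Combining this with the Green function lower bound produces $u(x) \geq \kappa\, C\, r(x)^{2-n}$, as claimed.

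The main obstacle lies in justifying both the non-parabolicity conclusion and the sharp lower bound for the Green function; both rest on standard but nontrivial results, namely the Li--Yau heat kernel estimate and the Grigoryan--Varopoulos parabolicity dichotomy. Once these are in place, the comparison step via the minimum principle is essentially routine.
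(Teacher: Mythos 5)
Your argument is correct, but it takes a noticeably heavier route than the paper. The paper compares $u$ directly with the explicit function $r^{2-n}$, which is subharmonic in the distributional sense on $M \setminus \{o\}$ whenever $\mathrm{Ric} \geq 0$ (a one-line consequence of the Laplacian comparison theorem $\Delta r \leq (n-1)/r$); setting $C = \min_{\partial B(o,1)} u$, the function $u/C - r^{2-n}$ is superharmonic on annuli $B(o,R)\setminus \overline{B(o,1)}$, is nonnegative on the inner boundary, and has vanishing boundary data at infinity, so the minimum principle and $R \to \infty$ give the claim immediately. You instead route the comparison through the minimal Green function: you first deduce non-parabolicity from the existence of the positive non-constant superharmonic $u$, then invoke the Li--Yau heat kernel lower bound plus Bishop--Gromov to get $G(o,x) \geq C r(x)^{2-n}$, and finally run the same minimum-principle comparison against $\kappa G(o,\cdot)$. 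All of these steps are sound (including the implicit use of the two-sided Li--Yau bound to ensure $G(o,\cdot) \to 0$ at infinity, which your minimum principle needs), but the Green function and the heat kernel estimates are doing work that the Laplacian comparison theorem does for free: under $\mathrm{Ric} \geq 0$ the barrier $r^{2-n}$ is already subharmonic, so there is no need to manufacture a harmonic comparison function. Your approach does have the merit of being more robust --- it would survive in settings where one only controls the heat kernel or the volume growth rather than the distance function's Laplacian --- but for this proposition the paper's elementary barrier is the shorter path.
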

\begin{proof}
Let $C = \min_{\partial B(0, 1)} u$, and recall that $\Delta r^{2-n}$ is sub-harmonic in the sense of distributions in $M \setminus \{o\}$ (see e.g. the proof of \cite[Lemma 2.2]{Ago_Fog_Maz_1}). Then, the function $u/C - r^{2-n}$ is super-harmonic in $B(o, R) \setminus \overline{B(o, 1)}$ and in particular it satisfies
\[
(u/C - r^{2-n})(x) \geq \sup_{\partial B(o, 1)}(u/C - r^{2-n}) + \sup_{\partial B(o, R)}(u/C - r^{2-n}) \geq \sup_{\partial B(o, R)}(u/C - r^{2-n})    
\]
for any $x \in B(o, R) \setminus \overline{B(o, 1)}$, where the last inequality is due to the specific choice of $C$. Since both $u$ and $r^{2- n}$  vanish at infinity, we get that the claim by letting $R$ at infinity and by the arbitrariness of $x$.
\end{proof}

\section{A key vector field and a useful auxiliary function}
In this section we introduce a vector field with nonnegative divergence, vanishing exactly in the case in the flat model situation and set the main properties of the integral auxiliary function $V$. 

\smallskip

We introduce the vector field with nonnegative divergence that rules our problem. Its intuition dates back to the early work of Obata \cite{obata-critlap}, and it constitutes a key ingredient also in the study of subcritical elliptic equations performed in \cite{gidas-spruck}. 
As in the latter, the vector field is better understood in terms of the function 
\begin{equation}
\label{v-def}
v = u^{-\frac{2}{n-2}},
\end{equation}
with $u$ a strictly positive solution to \eqref{laplace-intro}. This is due to the fact that, in the model situation of the flat $\R^n$ with $u$ given by a Talentian \eqref{talentian}, $v$ becomes an affine function of $d(o, x)^2$, that in particular solves
\[
\nabla \nabla v = \frac{\Delta v}{n} g.
\] 
In fact, the squared norm of the trace-free Hessian of $v$ will constitute, together with a Ricci curvature term, the nonnegative divergence of the vector field. 

\smallskip

We define 
\begin{equation}
\label{vector-field}
X = \frac{1}{v^{n-1}}\left[\frac{1}{2}\nabla \abs{\nabla v}^2 -  \frac{1}{2} \frac{\abs{\nabla v}^2\nabla v}{v} - 2 \frac{{\nabla v}}{v}\right].
\end{equation}
We actually give, for the reader's sake, a self-contained computation of its divergence.
In order to do it, we first point out that the critical Laplace equation translates as
\begin{equation}
\label{eq-v}
\Delta v = \frac{n}{2} \frac{\abs{\nabla v}^2}{v} + \frac{2n}{v} 
\end{equation}
in terms of $v$.
\begin{proposition}
\label{field-prop}
Let $(M, g)$ be a Riemannian manifold, $\Omega \subset M$ an open set and $u: \Omega \to \R$ a strictly positive function solving \eqref{laplace-intro}. Then, letting $v$ be as in \eqref{v-def}, the vector field defined by \eqref{vector-field} satisfies
\begin{equation}
\label{divergence}
\dive X = \frac{1}{v^{n-1}}\left[\left\vert \nabla \nabla v - \frac{\Delta v}{n} g \right\vert^2 + \ric \left(\nabla v, \nabla v\right) \right].
\end{equation}
\end{proposition}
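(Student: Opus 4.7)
The plan is a direct computation of $\dive X$, with Bochner's formula providing the Hessian term and the equation \eqref{eq-v} for $v$ used systematically to eliminate $\Delta v$ and its gradient. Write $X = v^{-(n-1)} Y$ with
\[
Y = \tfrac{1}{2}\nabla|\nabla v|^2 - \tfrac{1}{2}\tfrac{|\nabla v|^2}{v}\nabla v - \tfrac{2}{v}\nabla v,
\]
so that $\dive X = v^{-(n-1)} \dive Y - (n-1)\,v^{-n}\langle \nabla v, Y\rangle$. My first step is to compute $\dive Y$ and $\langle \nabla v, Y\rangle$ separately, treating $f := |\nabla v|^2$ as a convenient abbreviation and using the identity $\tfrac{1}{2}\nabla f = \nabla\nabla v\,(\nabla v, \cdot)^{\sharp}$, equivalently $\tfrac{1}{2}\langle\nabla f, \nabla v\rangle = \nabla\nabla v(\nabla v, \nabla v)$.

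Second, I apply Bochner's formula
\[
\tfrac{1}{2}\Delta f = |\nabla\nabla v|^2 + \langle \nabla \Delta v, \nabla v\rangle + \ric(\nabla v, \nabla v),
\]
which produces $|\nabla\nabla v|^2$ and $\ric(\nabla v,\nabla v)$ inside $\dive Y$. The remaining algebraic terms involve $\Delta v$, $\langle \nabla\Delta v,\nabla v\rangle$, and various rational expressions in $v, f, \langle\nabla f,\nabla v\rangle$. Here I substitute the relation \eqref{eq-v}, namely $\Delta v = \tfrac{n}{2}\tfrac{f}{v} + \tfrac{2n}{v}$, and its gradient, to express everything in terms of $v, f$ and $\langle \nabla f,\nabla v\rangle$.

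Third, I collect the terms that contain no Ricci and no Hessian-squared contribution and check that they combine precisely into $-\tfrac{(\Delta v)^2}{n}$. This is the algebraic heart of the computation, and it is exactly what is needed for the identity
\[
|\nabla\nabla v|^2 - \tfrac{(\Delta v)^2}{n} = \left|\nabla\nabla v - \tfrac{\Delta v}{n}g\right|^2,
\]
to turn the Hessian-squared piece into the trace-free Hessian-squared piece claimed in \eqref{divergence}. The cross-contributions from $(n-1)v^{-n}\langle\nabla v, Y\rangle$, which contain $\nabla\nabla v(\nabla v,\nabla v)$ and $f^2/v$, must cancel against corresponding pieces inside $\dive Y$; the specific normalizations $\tfrac{1}{2}$ and $2$ appearing in the definition of $X$ are dictated precisely by this cancellation together with the coefficient $2n$ coming from $\Delta v$.

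The main obstacle is purely bookkeeping: many terms of the form $f^2/v^2$, $f/v^2$, $\langle \nabla f,\nabla v\rangle/v$ and $\nabla\nabla v(\nabla v,\nabla v)/v$ appear with different coefficients from $\dive Y$ and from $(n-1)v^{-n}\langle\nabla v,Y\rangle$, and checking that all non-Bochner contributions regroup exactly as $-(\Delta v)^2/n$ after using \eqref{eq-v} requires care. Once this cancellation is verified, multiplying back by $v^{-(n-1)}$ yields \eqref{divergence}.
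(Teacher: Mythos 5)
Your plan is correct and is essentially the paper's own proof: the product-rule decomposition $\dive X = v^{-(n-1)}\dive Y - (n-1)v^{-n}\langle\nabla v, Y\rangle$, Bochner's formula for $\tfrac12\Delta|\nabla v|^2$, and systematic substitution of \eqref{eq-v} to eliminate $\Delta v$ and $\langle\nabla\Delta v,\nabla v\rangle$. The cancellation you defer does check out --- the $\langle\nabla|\nabla v|^2,\nabla v\rangle/v$ contributions cancel exactly between $\dive Y$ and $(n-1)v^{-1}\langle\nabla v,Y\rangle$, and the remaining rational terms sum to $-\tfrac{n}{4}\tfrac{|\nabla v|^4}{v^2} - 2n\tfrac{|\nabla v|^2}{v^2} - \tfrac{4n}{v^2} = -\tfrac{(\Delta v)^2}{n}$ --- so the argument is complete once the bookkeeping is written out.
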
 
\begin{proof}
We have
\begin{equation}
\label{divergence1}
\begin{split}
\dive X \,=\,  &\frac{1}{v^{n-1}}\left[\frac{1}{2}\dive (\nabla \abs{\nabla v}^2) - \frac{1}{2} \dive \left(\abs{\nabla v}^2 \frac{\nabla v}{v}\right) - 2 \dive\left( \frac{\nabla v}{v}\right)\right] \\
& - \frac{(n-1)}{v^{n-1}}\left[\frac{1}{2}\left\langle\nabla \abs{\nabla v}^2, \frac{\nabla v}{v}\right\rangle - \frac{\abs{\nabla v}^4}{2v^2} - 2\frac{\abs{\nabla v}^2}{v^2} \right].
\end{split}
\end{equation}
We do now compute the three divergence terms. We have, using Bochner identity and \eqref{eq-v},
\begin{equation}
\label{divergence2}
\begin{split}
\dive (\nabla \abs{\nabla v}^2) &= 2\left[\left\vert \nabla \nabla v - \frac{\Delta v}{n}\right\vert^2 + \frac{(\Delta v)^2}{n} + \left\langle \nabla \Delta v, \nabla v\right\rangle + \ric(\nabla v, \nabla v)\right] \\
&= 2\left[\left\vert \nabla \nabla v - \frac{\Delta v}{n}\right\vert^2 - \frac{n}{4}\frac{\abs{\nabla v}^4}{v^2} + \frac{4n}{v^2} + \frac{n}{2} \left\langle\nabla \abs{\nabla v}^2, \frac{\nabla v}{v} \right\rangle + \ric(\nabla v, \nabla v) \right].
\end{split}
\end{equation}
Moreover,
\begin{equation}
\label{divergence3}
\dive \left(\abs{\nabla v}^2 \frac{\nabla v}{v}\right) = \left\langle \nabla \abs{\nabla v}^2, \frac{\nabla v}{v}\right\rangle + \frac{1}{2}(n-2) \frac{\abs{\nabla v}^4}{v^2} + 2n \frac{\abs{\nabla v}^2}{v^2}. 
\end{equation}
Finally,
\begin{equation}
\label{divergence4}
\dive\left( \frac{\nabla v}{v}\right) = \frac{(n - 2)}{2} \frac{\abs{\nabla v}^2}{v^2} + \frac{2n}{v^2}.
\end{equation}
Plugging \eqref{divergence2}, \eqref{divergence3} and \eqref{divergence4} into \eqref{divergence1} leaves us with identity \eqref{divergence}.
\end{proof}
We now define, a priori only for regular level sets of $v$, the function  
\[
V: (\inf_M v, + \infty) \setminus v(\mathrm{Crit}(v)) \to (0+ \infty),
\]
where $\mathrm{Crit}(v) = \{ \nabla v = 0\}$,
given by 
\begin{equation}
\label{V-def}
V(s) = \int\limits_{\{v = s\}}\abs{\nabla v}^3 \dd\sigma. 
\end{equation}
In particular, by Sard's Theorem, this function is defined for almost every $s \in (\inf_M v, + \infty)$.
In the next result, we show that it is actually equivalent to an absolutely continuous function, and compute its derivative. In order to justify the formal computation, we argue as in the recent paper \cite{colding-eigenfunctions}, but we provide, for completeness, all details.
\begin{proposition}
Let $(M, g)$ be a Riemannian manifold, and let $u$ be a positive solution to \eqref{laplace-intro} that tends to zero at infinity. Then, the function $V$ defined above is equivalent to an absolutely continuous function $V : (\inf_M v, + \infty) \to \R$, and moreover
\begin{equation}
\label{V-der}
V'(s) = \int\limits_{\{v = s\}} \left\langle \nabla \abs{\nabla v}^2, \frac{\nabla v}{\abs{\nabla v}} \right\rangle \dd\sigma + \frac{n}{2} s^{-1}\int\limits_{\{v = s\}} \abs{\nabla v}^3 \dd\sigma + 2n s^{-1} \int_{\{v = s\}} \abs{\nabla v} \dd\sigma 
\end{equation}
holds true for almost any value $s \in (\inf_M v, + \infty)$. 
\end{proposition}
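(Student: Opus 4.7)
The plan is to identify, for regular values $s$ of $v$, the boundary integral $V(s)$ with a volume integral over the sublevel set $\Omega_s := \{v \leq s\}$, and then to exploit the coarea formula to read off both the absolute continuity and the derivative \eqref{V-der}. Note that $u \to 0$ at infinity forces $v \to +\infty$ at infinity, so every $\Omega_s$ is relatively compact.

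First, I would fix a regular value $s$ of $v$ (almost every $s$ by Sard's theorem) so that $\partial\Omega_s = \{v = s\}$ is a smooth hypersurface with outward unit normal $\nabla v/|\nabla v|$, and apply the divergence theorem:
\begin{equation*}
V(s) = \int_{\{v=s\}} |\nabla v|^2 \Big\langle\nabla v, \tfrac{\nabla v}{|\nabla v|}\Big\rangle \dd\sigma = \int_{\Omega_s} \divg\!\bigl(|\nabla v|^2 \nabla v\bigr) \dd\mu.
\end{equation*}
Expanding the divergence and using \eqref{eq-v} to substitute for $\Delta v$ gives
\begin{equation*}
V(s) = \int_{\Omega_s} F \dd\mu, \qquad F := \langle \nabla|\nabla v|^2, \nabla v\rangle + \tfrac{n}{2v}|\nabla v|^4 + \tfrac{2n}{v}|\nabla v|^2.
\end{equation*}

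Next, I would \emph{define} $W(s) := \int_{\Omega_s} F\dd\mu$ for every $s > \inf_M v$: the integrand $F$ is smooth on $M$ and $\Omega_s$ is relatively compact, so $W$ is finite and continuous, and by the previous step it coincides with $V$ on the full-measure set of regular values. For $s_1 < s_2$ the coarea formula applied to $v$ yields
\begin{equation*}
W(s_2) - W(s_1) = \int_{\Omega_{s_2}\setminus\Omega_{s_1}} F\dd\mu = \int_{s_1}^{s_2}\!\int_{\{v=t\}} \frac{F}{|\nabla v|}\dd\sigma \,\dd t,
\end{equation*}
which proves $W$ is absolutely continuous on $(\inf_M v, +\infty)$ with almost everywhere derivative
\begin{equation*}
W'(s) = \int_{\{v=s\}}\!\Big\langle \nabla |\nabla v|^2, \tfrac{\nabla v}{|\nabla v|}\Big\rangle \dd\sigma + \tfrac{n}{2s}\!\int_{\{v=s\}}\!\! |\nabla v|^3 \dd\sigma + \tfrac{2n}{s}\!\int_{\{v=s\}}\!\! |\nabla v| \dd\sigma,
\end{equation*}
which is precisely \eqref{V-der}. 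Thus $W$ is the sought absolutely continuous representative of $V$.

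The main obstacle I expect is a rigorous execution of the divergence step at critical values and the identification of $W$ with $V$ on a genuinely constructive set. This is exactly the point addressed by the smoothing argument of \cite{colding-eigenfunctions}: one substitutes $|\nabla v|^2$ by $|\nabla v|^2 + \delta$ to obtain smooth vector fields and associated functions $V_\delta$, $W_\delta$ for which the identity holds at every $s$ by the standard divergence theorem on a smooth exhaustion of $\Omega_s$, and then passes to the limit $\delta \to 0^+$ via dominated convergence on the compact domain $\Omega_s$ (permissible since $|\nabla v|$, $v$, and $1/v$ are bounded there). Since the critical set $\mathrm{Crit}(v)$ has measure zero, this rigorously identifies $W$ with the a.e.\ values of $V$ and delivers \eqref{V-der} at almost every $s \in (\inf_M v, +\infty)$.
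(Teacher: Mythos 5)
Your proposal is correct and follows essentially the same route as the paper: divergence theorem on the compact sublevel sets at regular values, a continuous volume-integral representative, the coarea formula for absolute continuity, and the $\delta$-regularization of \cite{colding-eigenfunctions} to handle the critical set (whose measure-zero property, which the paper justifies from $\Delta u<0$, you correctly invoke). No substantive differences to report.
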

\begin{proof}
Let $s$ be a regular value for $v$, so that
\[
V(s) = \int_{\{v = s\}}\abs{\nabla v}^2 \left\langle \nabla v, \frac{\nabla v}{\abs{\nabla v}}\right\rangle \dd\sigma = \int\limits_{\{v < s\}} \dive (\abs{\nabla v}^2 \nabla v) \dd\mu,
\]
where we used the divergence theorem and that, since $v$ tends to infinity, it is a proper function and the outer unit normal to $\{v < s\}$ is given by $\nabla v /\abs{\nabla v}$.
 
Observe now that, arguing exactly as in the basic proof of \cite[Lemma 1.1]{colding-eigenfunctions}, the critical set of $u$ is locally contained in smooth $(n-1)$-dimensional manifolds, in particular it is of null $n$-dimensional Hausdorff measure and moreover all the level sets of $u$ (and hence of $v$) have zero $n$-dimensional Hausdorff measure. Indeed, this argument just relies on the fact that $\Delta u < 0$. 
In particular, the function $\tilde {V} : (\inf_M v, + \infty) \to \R$ defined by 
\[
\tilde{V}(s) = \int_{\{v = s\}}\abs{\nabla v}^2 \left\langle \nabla v, \frac{\nabla v}{\abs{\nabla v}}\right\rangle \dd\sigma = \int\limits_{\{v < s\}} \dive (\abs{\nabla v}^2 \nabla v) \dd\mu,
\]
is immediately seen to be continuous. Indeed, for $\epsilon > 0$, we have
\[
\tilde{V}(s + \epsilon) - \tilde{V}(s) = \int\limits_{\{s \leq v < s + \epsilon\}} \dive (\abs{\nabla v}^2 \nabla v) \dd\mu = \int\limits_{\{s < v < s + \epsilon\}} \dive (\abs{\nabla v}^2 \nabla v) \dd\mu 
\] 
where we have used $\mu\{v = s\} = 0$, and, in particular, the right-hand side above approaches $0$ as $\epsilon \to 0^+$ by the Dominated Convergence Theorem. 

We proved so far that $V$ is equivalent to the continuous function $\tilde{V}$, and in particular we can safely identify one with the other. In order to show the absolute continuity, let, as in the proof of \cite[Lemma 1.3]{colding-eigenfunctions},
\[
V_\delta (s) = \int\limits_{\{v < s\}} \frac{\dive(\abs{\nabla v}^2 {\nabla v})\abs{\nabla v}}{\abs{\nabla v} + \delta} \dd\sigma,
\] 
for $\delta > 0$. Observe that, since as already pointed out $\mu \{\nabla v= 0\} = 0$, the above expression converges by dominated convergence theorem as $\delta \to 0^+$ to $V(s)$. 
Moreover, by coarea formula,
we have
\[
V_\delta (s) = \int\limits_{v_0}^s\int_{\{v = t\}} \frac{\dive(\abs{\nabla v}^2 \nabla v)}{\abs{\nabla v} + \delta} \dd\sigma \dd t,
\]
where we let $v_0 = \inf_M v$.
By the Monotone Convergence Theorem, we can pass to the limit as $m \to\+\infty$ also in the right-hand side of the identity above, and deduce that
\begin{equation}
\label{abs-cont}
V(s) = \int\limits_{v_0}^s\int_{\{v = t\}} \frac{\dive(\abs{\nabla v}^2\nabla v)}{\abs{\nabla v}} \dd\sigma \dd t,
\end{equation}
for any $s \in (\inf_M v, + \infty)$, that is the absolute continuity of $V$. Finally, computing, with the aid of \eqref{eq-v},
\[
\dive(\abs{\nabla v}^2\nabla v) = \left\langle \nabla \abs{\nabla v}^2, {\nabla v}\right\rangle +  \frac{n}{2} \frac{\abs{\nabla v}^4}{v} + 2n \frac{\abs{\nabla v}^2}{v},
\]
and plugging it into \eqref{abs-cont}, we also showed \eqref{V-der}.
\end{proof}
We finally link the function $V$ and its derivative to the vector field $X$. We have, by the Divergence Theorem,
\begin{equation}
\int\limits_{\{v < s\}} \dive X \dd\mu = \frac{\int_{\{v = s\}} \left\langle \nabla \abs{\nabla v}^2, {\nabla v}{\abs{\nabla v}}^{-1} \right\rangle \dd\sigma}{s^{n-1}} -\frac{1}{2}\frac{\int_{\{v = s\}}\abs{\nabla v}^3 \dd\sigma}{s^n} - 2 \frac{\int_{\{v = s\}} \abs{\nabla v} \dd\sigma}{s^{n}}
\end{equation}
for any regular value $s$. Coupling this with \eqref{divergence} and \eqref{v-def}, we deduce the following.

\begin{corollary}
Let $(M, g)$ be a complete Riemannian manifold. Let $u$ be a strictly positive solution to \eqref{laplace-intro} that vanishes at infinity. Then, we have
\begin{equation}
\label{identity-key}
\begin{split}
\int\limits_{\{v < s\}} \frac{1}{v^{n-1}}\left[\left\vert \nabla \nabla v - \frac{\Delta v}{n} g \right\vert^2 + \ric \left(\nabla v, \nabla v\right) \right] \dd\mu = \frac{1}{2}\frac{V'(s)}{s^{n-1}} &- \frac{1}{4}(n+2) \frac{V(s)}{s^n} \\
&- (n+2)\frac{\int_{\{v = s\}} \abs{\nabla v} \dd\sigma}{s^n}
\end{split}
\end{equation}
for almost any $s \in v(M)$.
\end{corollary}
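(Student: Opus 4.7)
The plan is to combine the three ingredients already at hand: the Divergence Theorem applied to $X$ on the sublevel set $\{v < s\}$, the divergence formula of Proposition \ref{field-prop}, and the derivative identity \eqref{V-der} for $V$. Since $u$ vanishes at infinity, $v = u^{-2/(n-2)}$ is a proper exhaustion of $M$, so $\{v < s\}$ is relatively compact for every $s > \inf_M v$, and for every regular value $s$ of $v$ the outward unit normal to the smooth boundary $\{v = s\}$ is exactly $\nabla v/\abs{\nabla v}$.

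For such $s$, the prefactor $v^{-(n-1)}$ in \eqref{vector-field} reduces along the boundary to $s^{-(n-1)}$, so a direct computation of $\langle X, \nabla v/\abs{\nabla v}\rangle$ paired with the Divergence Theorem yields, after using $\langle \nabla v, \nabla v/\abs{\nabla v}\rangle = \abs{\nabla v}$ on $\{v=s\}$ and recalling the definition \eqref{V-def} of $V$, the flux identity
\[
\int_{\{v<s\}}\dive X\dd\mu = \frac{1}{2s^{n-1}}\int_{\{v=s\}}\left\langle \nabla\abs{\nabla v}^2,\frac{\nabla v}{\abs{\nabla v}}\right\rangle\dd\sigma - \frac{V(s)}{2s^n} - \frac{2}{s^n}\int_{\{v=s\}}\abs{\nabla v}\dd\sigma.
\]
It then suffices to solve \eqref{V-der} for $\int_{\{v=s\}}\langle\nabla\abs{\nabla v}^2, \nabla v/\abs{\nabla v}\rangle\dd\sigma$ and to substitute; collecting coefficients, the two contributions proportional to $V(s)/s^n$ combine into $-(n+2)/4$, while the two contributions proportional to $s^{-n}\int_{\{v=s\}}\abs{\nabla v}\dd\sigma$ combine into $-(n+2)$, reproducing the right-hand side of \eqref{identity-key}.

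The left-hand side is then produced simply by replacing $\dive X$ under the integral sign by the expression of Proposition \ref{field-prop}. The restriction to almost every $s$ in $v(M)$ is dictated by Sard's Theorem, which provides a full-measure subset of regular values inside $(\inf_M v, +\infty)$; beyond that, no genuine technical obstacle arises, since $X$ is smooth on all of $M$ (both $v$ and $1/v$ being smooth) and at each regular $s$ the boundary $\{v=s\}$ is smooth with the needed compactness furnished by the exhaustion property of $v$. The argument is thus essentially bookkeeping on top of the identities already derived, the only mild care being the correct identification of the flux term with the derivative of $V$.
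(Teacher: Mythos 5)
Your proposal is correct and follows essentially the same route as the paper: apply the Divergence Theorem to $X$ over $\{v<s\}$ with outward normal $\nabla v/\abs{\nabla v}$, substitute the expression for $\int_{\{v=s\}}\langle\nabla\abs{\nabla v}^2,\nabla v/\abs{\nabla v}\rangle\dd\sigma$ obtained from \eqref{V-der}, and replace $\dive X$ via Proposition \ref{field-prop}, with Sard's Theorem justifying the almost-every-$s$ restriction. Your bookkeeping is in fact the correct one: the coefficient $\tfrac12$ on the first flux term (which the paper's intermediate display drops, apparently a typo) is exactly what makes the $V(s)/s^n$ and $s^{-n}\int_{\{v=s\}}\abs{\nabla v}\dd\sigma$ coefficients combine to $-(n+2)/4$ and $-(n+2)$ as in \eqref{identity-key}.
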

The main aim, for proving Theorems \ref{main-energy} and \ref{main}, will be showing that in their assumptions, the left-hand side of \eqref{identity-key} vanishes for any $s$.
\section{Proof of the main results}
In this section we prove Theorems \ref{main} and \ref{main-energy}. We first prove the latter, i.e. the rigidity result for finite energy solutions, and then prove that in the setting of Theorem \ref{main} the function $u$ actually enjoys finite energy. We start with the following simple splitting principle, consequence of the basic result according to which the existence of a nontrivial function with vanishing trace-less Hessian implies a warped product splitting of the metric \cite[Theorem 5.7.4]{Petersen_book}, \cite{cat-man-maz}, \cite[Section 1]{Che-Cold}.
\begin{lemma}
\label{lemma-splitting}
Let $(M, g)$ be a complete manifold, and let $u$ be a solution to \eqref{laplace-intro} that vanishes at infinity. If 
\begin{equation}
\label{diva0}
\left\vert \nabla \nabla v - \frac{\Delta v}{n} g \right\vert^2 + \ric \left(\nabla v, \nabla v\right) \equiv 0 
\end{equation}
on $M$, then $(M, g)$ is isometric to flat $\R^n$ and $u$ is of the form \eqref{talentian}.
\end{lemma}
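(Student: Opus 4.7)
The plan is to use the vanishing traceless Hessian identity implicit in \eqref{diva0} to realize $(M,g)$ as a warped product with a single pole, then to pin the warping function down via the ODE induced by \eqref{eq-v}, and finally to recognize the resulting metric as Euclidean $\R^n$. Within the paper's standing nonnegative Ricci assumption both summands in \eqref{diva0} are pointwise nonnegative and therefore vanish separately; in particular $\nabla\nabla v \equiv (\Delta v / n)\, g$ on $M$. Since $u>0$ vanishes at infinity, $v=u^{-2/(n-2)}$ is proper with $v\to +\infty$ at infinity and attains its strict minimum at some $p\in M$ with $\nabla v(p)=0$; in particular $v$ is nonconstant.

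The Tashiro-type warped-product splitting theorem for nonconstant functions with vanishing traceless Hessian on a complete Riemannian manifold (see \cite[Theorem~5.7.4]{Petersen_book}, as well as \cite{cat-man-maz}, \cite[Section~1]{Che-Cold}) then produces a smooth compact $(n-1)$-dimensional manifold $(N,g_N)$ and a smooth function $\phi:[0,+\infty)\to [0,+\infty)$ with $\phi(0)=0$, $\phi'(0)=1$, $\phi>0$ on $(0,+\infty)$, such that
\[
(M,g)\;=\;\bigl([0,+\infty)\times N,\; dt^2 + \phi(t)^2 g_N\bigr),
\]
with $v=v(t)$ depending only on $t=d(p,\cdot)$. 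On such a warped product the Hessian identity specializes to $v''=v'\,\phi'/\phi$, whence $(v'/\phi)'\equiv 0$ and $v'=c\,\phi$ for some constant $c>0$.

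Plugging $\abs{\nabla v}^2=(v')^2$ and $\Delta v = n v''$ (by tracing the Hessian identity) into \eqref{eq-v} yields the scalar ODE
\[
2 v v'' \;=\; (v')^2 + 4,
\]
subject to $v(0) =: a > 0$ and $v'(0)=0$ (the latter from $\phi(0)=0$ and $v'=c\phi$). Standard ODE uniqueness gives the explicit solution $v(t) = a + t^2/a$; consequently $v'(t)=2t/a$, and matching $v'=c\phi$ with $\phi'(0)=1$ forces $c=2/a$ and $\phi(t)=t$.

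Hence $g = dt^2 + t^2 g_N$, and smoothness of $g$ at the pole $t=0$ is equivalent to $(N,g_N)$ being the round unit sphere $(S^{n-1},g_{S^{n-1}})$, so $(M,g)\cong (\R^n,g_{\R^n})$ with $t=d(o,x)$. The identity $u=v^{-(n-2)/2}$ then reproduces exactly the Talentian \eqref{talentian}. The main obstacle I foresee is the global warped-product splitting in the presence of the pole, together with the smoothness-at-the-pole argument forcing the link $(N,g_N)$ to be round; both are however standard and available from the cited references.
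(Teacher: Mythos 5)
Your proof is correct and rests on the same key ingredient as the paper's: the Tashiro/Petersen warped--product splitting induced by the vanishing of the traceless Hessian of $v$, followed by the reduction of \eqref{eq-v} to an ODE along the radial coordinate. The two arguments diverge only in how the warping function is pinned down. The paper keeps both cases $I=\R$ and $I=[0,+\infty)$ from the splitting theorem, uses the warped--product Ricci formula \eqref{ricci-warped} together with $\ric(\nabla v,\nabla v)\equiv 0$ to force $\phi''=0$, and then excludes the cylindrical case by noting it would give $\Delta v=0$, contradicting \eqref{eq-v}. You instead observe at the outset that $v=u^{-2/(n-2)}$ is proper and attains a minimum, so the splitting necessarily has a pole, and you then solve the initial value problem $2vv''=(v')^2+4$, $v(0)=a$, $v'(0)=0$ explicitly to get $v=a+t^2/a$ and hence $\phi(t)=t$. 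This bypasses both the case analysis and the Ricci formula for warped products (the term $\ric(\nabla v,\nabla v)\equiv 0$ enters your argument only through the remark that, under the standing nonnegative Ricci hypothesis, the two summands of \eqref{diva0} vanish separately --- a point the paper also uses implicitly). The only caveat is the normalization $\phi'(0)=1$ you take from the splitting theorem before invoking smoothness at the pole; this is harmless, since one may always rescale $g_N$ to arrange it, and the smoothness of $dt^2+t^2g_N$ at $t=0$ then forces $(N,g_N)$ to be the round unit sphere exactly as you say.
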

\begin{proof}
It is classically deduced from the vanishing of the trace-less Hessian of $v$, e.g. appealing to \cite[Theorem 5.7.4]{Petersen_book}, that $(M, g)$ must split a warped product $(I \times N, \dd\rho \otimes \dd\rho + \phi^2(\rho) g_{N})$, for some hypersurface $N$ in $M$ and with $I$ coinciding with $\R$ or $[0, + \infty)$. By the proof presented in the contribution above (see also the maybe more transparent computations carried out in the proof of \cite[Theorem 1.1]{cat-man-maz}, one also realizes that $N$ can be identified with a smooth level set of $v$, and that in particular it is a closed hypersurface. Again by construction of the splitting, one also has that in these coordinates 
$v'(\rho) = 2\alpha \phi(\rho)$ for some constant $\alpha$. Consequently, since the Ricci curvature of $g$ can be computed as
\begin{equation}
\label{ricci-warped}
\ric
 = 
-(n-1)
\frac{\phi''}{\phi}
\dd \rho \otimes \dd \rho
+ 
\ric_N
-
\big((n-2)(\phi')^2+\phi \phi''\big)^{2}
g_N,
\end{equation}
plugging in $\ric(\nabla v, \nabla v) = \abs{v'(\rho)}^2 \ric (\nabla \rho, \nabla \rho) = 0$
we deduce that $\phi = A + B\rho$,  for some constants $A, B$. If $B = 0$, then $(M, g)$ would be a cylinder with cross-section $N$, and $v = A\rho + C$, for some other constant $C$. Consequently, $\Delta v = 0$, and this is in contradiction with the equation \eqref{eq-v}. 

We can thus suppose, by possibly translating the coordinate $\rho$, that $A =  0$ and $\phi = B \rho $ for some constant $B \neq 0$. In particular, since $g$ must be smooth also as $\rho \to 0^+$, we have $I = [0, +\infty)$, $(N, g_N)$ the sphere $\Sf^{n-1}$ with its standard round metric and $b = 1$.
In other words, $M$ is the Euclidean space and $g$ is its flat norm.

Moreover, $v = \alpha \rho^2 + \beta$, with $\rho$ constituting the Euclidean distance from some origin $o$.  Plugging this function into \eqref{eq-v}, one immediately gets $\alpha\beta = 1$, and thus, by \eqref{V-def}, $u$ has the form \eqref{talentian}.
\end{proof}
 
Exploiting crucially \eqref{identity-key} and the above splitting principle, we prove Theorem \ref{main-energy}. 
  
\begin{proof}[Proof of Theorem \ref{main-energy}]  
Due to the finite energy condition \eqref{finiteenergy} and the coarea formula, we have
\[
\int\limits_0^{t_0} \int_{\{u =t\}} \abs{\nabla u} \dd\sigma \dd t < + \infty. 
\]
for any $t_0 \in u(M)$. Performing a change of variables, such condition translates in terms of $v$ as
\begin{equation}
\label{energy-v}
\int\limits_{s_0}^{+\infty} \int_{\{v=s\}} \frac{\abs{\nabla v}}{s^n} \dd\sigma \dd s < + \infty.
\end{equation}
We want to prove that
\begin{equation}
\label{diva0-inproof}
\int\limits_{\{v < s_j\}} \frac{1}{v^{n-1}}\left[\left\vert \nabla \nabla v - \frac{\Delta v}{n} g \right\vert^2 + \ric \left(\nabla v, \nabla v\right) \right] \dd\mu \to 0^+
\end{equation}
for some $s_j \to +\infty$. Indeed, since the Ricci curvature, and thus the integrand, are nonnegative, this would complete the proof by Lemma \ref{lemma-splitting}.
Assume now  by contradiction that \eqref{diva0-inproof} holds for no diverging sequences $\{s_j\}_{j \in \N}$. Then, by \eqref{identity-key}, we deduce that for almost any $s \in v(M)$
\[
V'(s) \geq \delta s^{n-1}
\]
for some $\delta > 0$. Integrating the above inequality in $(s_0, s)$ for some $s_0 \in v(M)$, we deduce the following growth for the absolutely continuous function $V$
\begin{equation}
\label{growth-v}
V(s) \geq \frac{\delta}{n} s^n + F(s_0),
\end{equation}
with $F(s_0) = V(s_0) - \frac{\delta}{n}{s_0}^n$.
Moreover, by choosing $s_0$ big enough, by coupling \eqref{decay-grad} with \eqref{control-below-u}, one gets
\begin{equation}
\label{decay-gradv1}
\abs{\nabla v}^2 (x) \leq C \frac{v^2}{r^2} (x) \leq C v(x) 
\end{equation}
for any $x \in \{v > s_0\}$. But then, by \eqref{growth-v} and \eqref{decay-gradv1}, the left-hand side of \eqref{energy-v} must also satisfy
\begin{equation}
\label{conto-finale-energy}
\int\limits_{s_0}^{+\infty}\int_{\{v = s\}} \frac{\abs{\nabla v}}{s^n} \dd\sigma \dd s = \int\limits_{s_0}^{+\infty} \int_{\{v = s\}} \frac{\abs{\nabla v}^3}{\abs{\nabla v}^2 s^n} \dd\sigma \dd s
\geq  \frac{1}{C} \int\limits_{s_0}^{+\infty} \frac{V(s)}{s^{n+1}} \dd s 
\geq \frac{1}{C} \int\limits_{s_o}^{+\infty} \frac{(\delta/n) s^n + F(s_0)}{s^{n+1}} \dd s,
\end{equation}
which contradicts \eqref{energy-v}.
\end{proof}
The proof of Theorem \ref{main} fully builds on the following improved decay estimate on $u$. It crucially uses again the vector field $X$, this time integrating its divergence on geodesic balls in place of sub-level sets of $v$, and a Harnack-type argument. 
\begin{proposition}
\label{decay-u-prop}
Let $(M, g)$ and $u$ satisfy the assumptions of Theorem \ref{main}. Then, for any positive $\epsilon < (n-2) /6$, we have
\begin{equation}
\label{decay-epsilon}
u(x) r^{(n-2)/2 + \epsilon} (x) \to 0
\end{equation} 
as $r(x) = d(o,x) \to +\infty$.
\end{proposition}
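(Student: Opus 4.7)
The plan is to argue by contradiction, combining the nonnegativity of $\dive X$ from Proposition \ref{field-prop} (integrated on geodesic balls) with a Moser--Harnack inequality for positive solutions of the critical Laplace equation and the volume lower bound \eqref{volume-condition}. Suppose the claim fails: there exist $\epsilon \in (0, (n-2)/6)$, $\delta > 0$, and $\{x_k\} \subset M$ with $r(x_k) \to \infty$ and $u(x_k)\, r(x_k)^{(n-2)/2+\epsilon} \geq \delta$. By Proposition \ref{yau} together with $u \leq C r^{-(n-2)/2}$, the coefficient $n(n-2) u^{4/(n-2)}$ is bounded by $C r(x_k)^{-2}$ on $B(x_k, r(x_k)/2)$, and a standard Moser--Harnack inequality on manifolds with nonnegative Ricci curvature upgrades the pointwise assumption to $u(y) \geq c_H \delta\, r(y)^{-(n-2)/2-\epsilon}$ for every $y \in B(x_k, r(x_k)/4)$.

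Next I integrate $\dive X \geq 0$ over $B(o, R)$ and invoke the Divergence Theorem:
\[
 0 \leq \int_{B(o,R)} \dive X\, \dd\mu \;=\; \int_{\partial B(o,R)} \langle X, \nabla r\rangle\, \dd\sigma.
\]
Expanding $\langle X, \nabla r\rangle$ from \eqref{vector-field} and estimating each piece on $\partial B(o, R)$ via $u \leq C r^{-(n-2)/2}$ (so $v = u^{-2/(n-2)} \geq c r$), the gradient decay from Corollary \ref{decaygrad-prop} (which yields $|\nabla v| \leq C v/r$), and the lower bound \eqref{control-below-u} (which controls $v^{n-1}$ suitably), I handle two of the three terms pointwise; the remaining term $v^{-(n-1)} \langle \nabla|\nabla v|^2, \nabla r\rangle$ involves the Hessian of $v$, and I would treat it by averaging the flux identity over $R \in [R_0, 2 R_0]$, converting the boundary integral into a bulk integral on the annulus $A_{R_0, 2R_0}$, where Corollary \ref{hessian-decay-corollary} combined with Cauchy--Schwarz produces the needed estimate for a suitable choice of $R \sim R_0$.

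Finally, the Harnack lower bound of the first step together with the volume condition (which, combined with Bishop--Gromov relative to $x_k$, yields $|B(x_k, r(x_k)/4)| \geq c\, r(x_k)^{b}$) provides a positive lower bound on the contribution of each ball $B(x_k, r(x_k)/4)$ to $\int_{B(o, 2 r(x_k))} \dive X$, via the trace-free Hessian term or the Ricci term in \eqref{divergence}. Matching this lower bound against the flux upper bound of the previous step produces the contradiction for $k$ large, and the balance of the polynomial exponents coming from $b$, the Harnack-improved decay rate $(n-2)/2 + \epsilon$, and the annular Hessian estimate dictates the threshold $\epsilon < (n-2)/6$.

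The main obstacle I anticipate is precisely this bookkeeping of exponents in the matching step, and in particular producing a usable pointwise (or bulk) lower bound on $\dive X$ from the Harnack-derived lower bound on $u$: since $\dive X = v^{-(n-1)}[\,|\nabla\nabla v - (\Delta v/n)g|^2 + \ric(\nabla v, \nabla v)\,]$, the mere positivity of $u$ does not force $\dive X$ to be pointwise positive, and one must presumably exploit the fact that $u$ being larger at infinity than the model Talentian decay $r^{-(n-2)/2-\epsilon}$ forces the trace-free Hessian of $v$ to be nontrivial in a quantitative way. I expect this to be the delicate step that pins down the precise constant $(n-2)/6$.
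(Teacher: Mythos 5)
Your setup (contradiction, Harnack propagation of the lower bound on $u$, flux of $X$ through geodesic spheres, and averaging over radii plus Corollary \ref{hessian-decay-corollary} and Cauchy--Schwarz for the Hessian boundary term) matches the paper's proof closely. But the step that is supposed to produce the contradiction is different from the paper's and, as you yourself suspect, it does not work. You propose to extract a \emph{positive lower bound} on $\int \dive X$ over the balls $B(x_k, r(x_k)/4)$ from the Harnack-improved lower bound on $u$, and to match it against the flux upper bound. There is no such lower bound: $\dive X = v^{1-n}\bigl[\,\lvert \nabla\nabla v - \tfrac{\Delta v}{n}g\rvert^2 + \ric(\nabla v,\nabla v)\bigr]$ can vanish identically on large regions regardless of how slowly $u$ decays (nothing at this stage rules out the region being flat with $v$ locally an affine function of a squared distance), so ``$u$ larger than Talentian decay forces the trace-free Hessian to be quantitatively nontrivial'' is not a provable implication, and the threshold $(n-2)/6$ does not come from any such matching. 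The paper's conclusion is much softer: the contradiction hypothesis is used only to get an \emph{upper} bound $v \leq C r^{1+2\epsilon/(n-2)}$, hence $\lvert\nabla v\rvert \leq C r^{2\epsilon/(n-2)}$, on full annuli $\overline{B(o,R_j)}\setminus B(o,k^{-1}R_j)$; these bounds make every boundary term in \eqref{divergence-balls} tend to zero along suitable radii $\hat R_j$ (the worst term being of order $\hat R_j^{-1+6\epsilon/(n-2)}$, which is where $\epsilon < (n-2)/6$ enters). By nonnegativity and monotonicity in $R$ this forces $\dive X \equiv 0$, Lemma \ref{lemma-splitting} then gives $(M,g)=\R^n$ with $u$ Talentian, and a Talentian function \emph{does} satisfy \eqref{decay-epsilon} --- contradicting the assumed failure of the decay. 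No quantitative lower bound on $\dive X$ is ever needed.

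Two further points. First, you do need the Harnack lower bound for the flux \emph{upper} bound, not for a lower bound on $\dive X$: the Cauchy--Schwarz step for the Hessian term requires $\int_A \lvert\nabla v\rvert^2$, hence an upper bound on $v$ on the whole annulus, which is exactly what the propagated lower bound on $u$ provides. Second, a Moser--Harnack inequality on a single ball $B(x_k, r(x_k)/4)$ only controls $u$ on part of the geodesic sphere; the paper instead uses the geodesic connectivity of annuli from \cite{minerbe-sobolev} (this is precisely where the volume condition \eqref{volume-condition} is used) together with the Yau gradient estimate $\lvert\nabla\log u\rvert \leq C/r$ integrated along geodesics contained in the annulus, so that the bound holds on the entire annulus.
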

\begin{proof}
We apply the Divergence Theorem to the vector field $X$ of \eqref{vector-field} in a geodesic ball $B(o, R)$. We get, by \eqref{divergence}, that
\begin{equation}
\label{divergence-balls}
\begin{split}
\int\limits_{B(o, R)} \frac{1}{v^{n-1}}\left[\left\vert \nabla \nabla v - \frac{\Delta v}{n} g \right\vert^2 + \ric \left(\nabla v, \nabla v\right) \right] \dd\mu = & \frac{1}{2} \int\limits_{\partial B(o, R)} \frac{1}{v^{n-1}}\langle\nabla \abs{\nabla v}^2 , \nu\rangle \dd\sigma \\ 
&- \frac{1}{2} \int\limits_{\partial B(o, R)} \frac{1}{v^n}\langle \abs{\nabla v}^2 \nabla v, \nu \rangle \dd\sigma \\ 
&- 2\int\limits_{\partial B(o, R)} \frac{1}{v^n}\langle {\nabla v}, \nu \rangle \dd\sigma ,
\end{split}
\end{equation}
where $\nu$ is the outward unit normal to $\partial B(o, R)$.
We claim that, if \eqref{decay-epsilon} does not hold for some positive $\epsilon < (n-2)/6$ , then there exists a sequence of radii $R_j \to +\infty$, as $j \to +\infty$, such that the right-hand side of \eqref{divergence-balls} for $R = R_j$ vanishes in the limit as $j \to +\infty$. Indeed, in this case, by the nonnegativity of the left hand side, we deduce that its whole integrand is null and deduce by Lemma \ref{lemma-splitting} that  $u$ is (an Euclidean) Talentian function of the form \eqref{talentian}. But this  actually satisfies \eqref{decay-epsilon}, giving a contradiction. 

\smallskip

Assume then that there exists a sequence of points $x_j$, with  $r(x_j) = d(o, x_j) \to + \infty$ as $j \to + \infty$, such that  
\begin{equation}
\label{decay-contradiction}
u(x_j) r^{\frac{(n-2)}{2} + \epsilon} (x_j) \geq C > 0
\end{equation}
for any $x_j$. By \cite[Proposition 0.4]{minerbe-sobolev}, in force by condition \eqref{volume-condition}, there exists $k > 1$ such that for any $R > 1$, for any two points $x, y$ in $\overline{B(o, R)} \setminus B(o, k^{-1}R)$ there exists a geodesic $\gamma_{x, y}$ connecting them \emph{fully contained} in such an annulus. In particular, setting $R_j = r(x_j)$, for any point $y \in \overline{B(o, R_j)} \setminus B(o, k^{-1}R_j)$ we have
\begin{equation}
\label{harnack-step}
\log u(y) - \log u(x_j) \leq \int\limits_0^{\abs{\gamma_{x_j, y}}} \abs{\nabla \log u}(\gamma_{x_j, y}(t)) \dd t \leq C \frac{\abs{\gamma_{x_j, y}}}{R_j}, 
\end{equation} 
where we denoted by $\abs{\gamma_{x_j, y}}$ the length of the geodesic, and where we employed in the last step the decay estimate for the gradient \eqref{decay-grad}. Moreover, as an application of the Bishop-Gromov monotonicity the ratio on the rightmost hand side of \eqref{harnack-step} is uniformly bounded, see the annular diameter estimate \cite[Lemma 1.4]{abresch-gromoll}. Consequently, any point $y \in \overline{B(o, R_j)} \setminus B(o, k^{-1}R_j)$ in fact satisfies \eqref{decay-contradiction}. Coupling this information with \eqref{control-below-u} and translating in terms of $v$, we get
\begin{equation}
\label{control-v-contradiction}
\frac{1}{C} r \leq v \leq C r^{\frac{2\epsilon}{n-2)} + 1}
\end{equation} 
on any annulus $\overline{B(o, R_j)} \setminus B(o, k^{-1}R_j)$, for some constant $C > 0$ not depending on $R_j$. The gradient decay \eqref{decay-grad}, applied to $v$, improves to
\begin{equation}
\label{decay-improved}
\abs{\nabla v}(x) \leq C r(x)^{\frac{2\epsilon}{n-2}}
\end{equation} 
for any $x \in \overline{B(o, R_j)} \setminus B(o, k^{-1}R_j)$, for some $C > 0$ not depending $R_j$. We can thus estimate, for almost any $\hat{R}_j \in (k^{-1}R_j, R_j)$,
\begin{equation}
\label{decay-int-grad}
\int_{\partial B(o, \hat{R}_j)} \frac{\abs{\nabla v}^b}{v^n} \leq \frac{C}{\hat{R}_j^{1- 2b\epsilon/(n-2)}},
\end{equation}
where we used also the lower bound in \eqref{decay-improved} and that $\abs{\partial B(o, R)} \leq \abs{\Sf^{n-1}} R^{n - 1}$ for almost any $R > 0$, again by the Bishop-Gromov monotonicity (of perimeters).
Since $\epsilon < (n-2)/6$, the above integral vanishes as $j \to + \infty$, when $b=3$ and $b = 1$, and this in particular will imply that the second and the third term in \eqref{divergence-balls} vanish along $\hat{R_j}$. We are left to work out an inequality suitable for controlling the first one.
By direct computation, we have
\[
\nabla \nabla v = \frac{n}{2} \frac{\abs{\nabla v}^2}{v} - \frac{2}{(n-2)} v^{\frac{n}{2}} \nabla\nabla u.
\]
Integrating on an annulus $A_{R_1, R_2}$, with generic radii $0 < R_1 < R_2$, we thus get
\begin{equation}
\label{integration-annulus}
\int\limits_{A_{R_1, R_2}} \frac{\abs{\nabla \nabla v}{\abs{\nabla v}}}{v^{n-1}} \dd\mu \leq C \left[\int\limits_{A_{R_1, R_2}} \frac{\abs{\nabla v}^3}{v^n} \dd\mu + \int\limits_{A_{R_1, R_2}}\frac{\abs{\nabla \nabla u}\abs{\nabla v}}{v^{\frac{n}{2} - 1}}\right] \dd\mu. 
\end{equation}
The first term in the right-hand side is estimated, choosing $R_1 = k^{-1}R_j$ and $R_2 = R_j$ and arguing as above with $b = 3$, with $R_j^{6\epsilon/(n-2)}$. We have used in particular that the volume of the annulus is controlled by $R_j^n$, by Bishop-Gromov's inequality. Restrict now to a smaller annulus $A_{\alpha R_j, \beta R_j}$, with $k^{-1} < \alpha < \beta < 1$ such that
\[
\int\limits_{A_{\alpha R_j, \beta R_j}} \abs{\nabla \nabla u}^2 \dd\mu \leq \frac{C}{R_j}^2,
\]
that exists by Corollary \ref{hessian-decay-corollary}. In such an annulus, the second term in the right hand side of \eqref{integration-annulus} can be estimated as
\begin{equation}
\int\limits_{A_{\alpha R_j, \beta R_j}}\frac{\abs{\nabla \nabla u}\abs{\nabla v}}{v^{\frac{n}{2} - 1}} \dd\mu \leq\frac{\left(\int_{A_{\alpha R_j, \beta R_j}}\abs{\nabla \nabla u}^2 \dd\mu \int_{A_{\alpha R_j, \beta R_j}} \abs{\nabla v}^2 \dd\mu\right)^{1/2}}{R_j^{\frac{n}{2}-1}} 
\leq C R_j^{4\epsilon/(n-2)},
\end{equation}
where we have employed the H\"older inequality together with the estimates on $v$ and $\abs{\nabla v}$ recalled above. We can thus deduce that
\begin{equation}
\int\limits_{A_{\alpha R_j, \beta R_j}} \frac{\abs{\nabla \nabla v}{\abs{\nabla v}}}{v^{n-1}} \dd\mu \leq C \left(R_j^{6\epsilon/(n-2)} + R_j^{4\epsilon/(n-2)}\right).
\end{equation}
Through the coarea formula, we can in particular infer the existence, for any $j$, of a set of positive measure $S_j \subset (\alpha R_j, \beta R_j)$ such that 
\begin{equation}
\label{decay-ultimo}
\int\limits_{\partial B(0, \hat{R}_j)} \frac{\abs{\nabla \nabla v}{\abs{\nabla v}}}{v^{n-1}} \dd\sigma \leq C\left(\frac{1}{\hat{R}_j^{1- 6\epsilon/(n-2)}} + \frac{1}{\hat{R}_j^{1 - 4\epsilon/(n-2)}}\right)
\end{equation}
for any $\hat{R}_j \in S_j$. 
Coupling this with \eqref{decay-int-grad} with $b=1$ and $b = 3$, that holds true in particular for almost any $\hat{R}_j \in S_j$, we can thus bound, up to a multiplicative constant, the absolute value of the right hand side of \eqref{divergence-balls} on spheres of such radii $\hat{R}_j$ with
\begin{equation}
\begin{split}
\int\limits_{\partial B(0, \hat{R}_j)} \frac{\abs{\nabla \nabla v}{\abs{\nabla v}}}{v^{n-1}} \dd\sigma + \int\limits_{\partial B(o, \hat{R}_j)} \frac{\abs{\nabla v}^3}{v^n} \dd\sigma + \int\limits_{\partial B(o, \hat{R}_j)} \frac{\abs{\nabla v}}{v^n} \dd\sigma \leq C\Bigg(\frac{1}{\hat{R}_j^{1 - 6\epsilon/(n-2)}} &+  \frac{1}{\hat{R}_j^{1 - 4\epsilon/(n-2)}} \\ 
&+ \frac{1}{\hat{R}_j^{1 - 2\epsilon/(n-2)}}\Bigg),
\end{split}
\end{equation}
that vanishes as $j \to +\infty$ since $\epsilon < (n-2) /6$, completing the proof.
\end{proof}
Theorem \ref{main} now follows as an immediate corollary of Theorem \ref{main-energy} and Proposition \ref{decay-u-prop}.
\begin{proof}[Proof of Theorem \ref{main}]
By combining \eqref{decay-epsilon} for some $0 < \epsilon < (n-2)/6$ with \eqref{decay-grad}, we deduce that
\[
\abs{\nabla u}^2 \leq C r(x)^{- n - 2\epsilon}
\]
for any $x \in M\setminus B(o, R)$, with $R$ big enough. Consequently, we have
\[
\int\limits_{M \setminus B(o, R)} \abs{\nabla u}^2 \dd\mu \leq C \int\limits_R^{+\infty} \int_{B(o, r)} r^{- n - 2\epsilon} \dd\sigma \dd r \leq C \int\limits_R^{+\infty} \frac{1}{r^{1+2\epsilon}} < + \infty, 
\]
where we have used the coarea formula and $\abs{B(o, r)} \leq \abs{\Sf^{n-1}} r^{n-1}$ for almost any $r > 0$. We conclude by Theorem \ref{main-energy}. 
\end{proof} 


\printbibliography 
\end{document}